\newtheorem{theorem}{Theorem}[section]
\newtheorem{lemma}{Lemma}[section]
\newtheorem{definition}{Definition}[section]
\newtheorem{example}{Example}[section]
\newcommand{\beq}{\begin{equation}}
\newcommand{\eeq}{\end{equation}}
\newcommand{\beqn}{\begin{eqnarray}}
\newcommand{\eeqn}{\end{eqnarray}}
\begin{document}

\title{Pseudo almost periodic solutions for neutral type high-order Hopfield neural networks with
mixed time-varying delays and leakage delays on time scales\thanks{This work is supported by
the National Natural Sciences Foundation of People's Republic of
China under Grants 11361072 and 11461082.} }
\author{ Yongkun Li$^a$\thanks{%
The corresponding author.}, Xiaofang Meng$^a$ and Lianglin Xiong$^b$  \\
$^a$Department of Mathematics, Yunnan University\\
Kunming, Yunnan 650091\\
People's Republic of China\\
$^b$School of Mathematics and Computer Science\\
 Yunnan Minzu University, Kunming, Yunnan 650500\\
 People's Republic of   China}

\date{}
\maketitle \allowdisplaybreaks
\begin{abstract}
In this paper, a class of neutral type high-order Hopfield neural networks with mixed time-varying delays and leakage delays on time scales is proposed. Based on the exponential dichotomy of linear dynamic equations on time scales, Banach's fixed point theorem and the
theory of calculus on time scales, some sufficient conditions are obtained for the existence and  global exponential stability of pseudo almost periodic solutions for this class of neural networks. Our results are completely new. Finally, we present an example to illustrate our results are effective.  Our example also shows that the continuous-time neural network and its discrete-time analogue have the same dynamical behaviors for the pseudo almost periodicity.
\end{abstract}

\textbf{Key words:}  Hopfield neural networks; Mixed time-varying delays; Leakage delays; Pseudo almost periodic solutions; Time scales.
\allowdisplaybreaks
\section{Introduction}

\setcounter{section}{1}
\setcounter{equation}{0}
\indent

Due to the fact that high-order Hopfield neural
networks (HHNNs) have stronger approximation property, faster convergence
rate, greater storage capacity, and higher fault tolerance than lower-order neural networks, high-order Hopfield neural networks have
been the object of intensive analysis by numerous authors in recent
years. In particular, there have been extensive results on the problem
of the existence and stability of equilibrium points, periodic solutions and
almost periodic solutions  of  HHNNs in the literature. We refer the reader to [1-9] and the references cited therein. For example, authors of \cite{4}  studied the problem of global exponential stability properties of such high-order Hopfield-type neural networks by utilizing Lyapunov functions; authors of \cite{5} derived some sufficient conditions for the global asymptotic stability of equilibrium point of HHNNs with constant time delays in terms of linear matrix inequality.

In fact, it is natural and important that systems will contain some information about the derivative of the past state to further describe and model the dynamics for such complex neural reactions \cite{15}, many authors investigated the dynamical behaviors of neutral type neural networks. For example, authors of [11-15] studied the stability, periodic solutions and almost periodic solutions for different classes of neutral type neural networks, respectively.

As we known, time delays inevitably exist in biological and artificial neural networks because of the finite switching speed of neurons and amplifiers \cite{dd11}, which can also affect the stability of neural network systems and may lead to some complex dynamic behaviors such as oscillation, chaos and instability. In reality, the mixed time-varying delays should be taken into account when modeling realistic neural networks \cite{17}. A leakage delay, which is the time delay in the leakage term of the systems and a considerable factor affecting dynamics for the worse in the systems, is being put to use in the problem of stability for neural networks. Such time delays in the leakage term are difficult to handle but has great impact on the dynamical behavior of neural networks [18-21]. Therefore, it is significant to consider neural networks with time delays in leakage terms.

Besides,
both continuous-time and discrete-time   neural networks have equally importance
in various applications and the theory of time scales was initiated by  Hilger \cite{d7} in his Ph.D. thesis in 1988, which can unify the continuous and discrete cases. Many authors have studied the dynamics of neural networks on time scales [15,23-25]. For example, in paper \cite{d11}, some sufficient conditions for the existence and global exponential stability of almost automorphic solutions for a class of neutral type HHNNs with delays in leakage terms on time scales are obtained; in paper \cite{d15}, the existence and global exponential stability of anti-periodic solutions for competitive neural networks with delays in the leakage terms on time scales are investigated.

On the other hand, the concept of pseudo-almost periodicity, which is the central subject in this paper, was introduced by
Zhang \cite{42} in the early nineties. As pointed out by Dads et al. in \cite{24}, it would be of great interest to study the dynamics of pseudo almost periodic systems with time delays. Pseudo almost periodic solutions in the context of differential equations were studied by several authors in [28-41]. For example, authors of \cite{32} studied the existence and the global exponential stability of the positive pseudo almost periodic solutions, which are more general and complicated than periodic and almost periodic solutions; authors of \cite{37} studied the existence and uniqueness of pseudo almost periodic solution of the shunting inhibitory cellular neural networks
 with time-varying delays in the leakage terms by using the exponential dichotomy theory and contraction mapping fixed point theorem. Recently, the concept of the pseudo almost periodic function on time scales has been introduced by Li and Wang \cite{39}. However, to the best of our knowledge, there is no paper published on the existence and stability of pseudo almost periodic solutions for neutral type HHNNs with mixed time-varying delays and leakage delays on time scales, which is very important in theories and applications and also is a very challenging problem.

Motivated by the above discussion, in this paper, we propose a neutral type high-order Hopfield neural network with
 mixed time-varying delays and leakage delays on time scales as follows:
\begin{eqnarray}\label{a1}
x_{i}^{\nabla}(t)&=&-c_{i}(t)x_{i}(t-\delta_{i}(t))+\sum_{j=1}^{n}a_{ij}(t)f_{j}(x_{j}(t))\nonumber\\
&&+\sum_{j=1}^{n}b_{ij}(t)g_{j}(x_{j}(t-\tau_{ij}(t)))
+\sum_{j=1}^{n}d_{ij}(t)\int_{t-\sigma_{ij}(t)}^{t}h_{j}(x_{j}^{\nabla}(s))\nabla s\nonumber\\
&&+\sum_{j=1}^{n}\sum_{l=1}^{n}T_{ijl}(t)k_{j}(x_{j}(t-\xi_{ijl}(t)))k_{l}(x_{l}(t-\zeta_{ijl}(t)))
+I_{i}(t),\,\,t\in \mathbb{T},
\end{eqnarray}
where $\mathbb{T}$ is an almost periodic time scale, $i=1,2,\ldots,n$, $n$ corresponds to the number of units in a neural network; $x_{i}(t)$ denotes the activation of the $i$th neuron at time $t$; $c_{i}(t)>0$ represents the rate with the $i$th unit will reset its potential to the resting state in isolation when disconnected from the
network and external inputs at time $t$; $a_{ij}(t)$, $b_{ij}(t)$ and $d_{ij}(t)$ represent the delayed
strengths of connectivity and neutral delayed strengths of connectivity between cell $i$ and $j$ at time $t$, respectively; $T_{ijl}(t)$ denotes the second-order connection weights of the neural network; $f_{j}$, $g_{j}$, $h_{j}$ and $k_{j}$ are the activation functions in system \eqref{a1}; $I_{i}(t)$ is an
external input on the $i$th unit at time $t$; $\delta_{i}$ denotes the leakage delay at time $t$ and satisfy $t-\delta_{i}(t)\in \mathbb{T}$, $\tau_{ij}$, $\sigma_{ij}$, $\xi_{ijl}$ and $\zeta_{ijl}$ are transmission delays at time $t$ and satisfy $t-\tau_{ij}(t)\in \mathbb{T}$ , $t-\sigma_{ij}(t)\in \mathbb{T}$, $t-\xi_{ijl}(t)\in \mathbb{T}$, $t-\zeta_{ijl}(t)\in \mathbb{T}$ for $t\in \mathbb{T}$.

For convenience, we let $[a,b]_{\mathbb{T}}=\{t|t\in[a,b]\cap\mathbb{T}\}$. And we introduce the following notations:
\[c_{i}^{+}=\sup\limits_{t\in\mathbb{T}}|c_{i}(t)|,\,\,\,c_{i}^{-}=\inf\limits_{t\in\mathbb{T}}|c_{i}(t)|,\,\,\,
\delta_{i}^{+}=\sup\limits_{t\in\mathbb{T}}|\delta_{i}(t)|,\,\,\,\tau_{ij}^{+}=\sup\limits_{t\in\mathbb{T}}|\tau_{ij}(t)|,\]
\[\sigma_{ij}^{+}=\sup\limits_{t\in\mathbb{T}}|\sigma_{ij}(t)|,\,\,\,
\xi_{ijl}^{+}=\sup\limits_{t\in\mathbb{T}}|\xi_{ijl}(t)|,\,\,\,
\zeta_{ijl}^{+}=\sup\limits_{t\in\mathbb{T}}|\zeta_{ijl}(t)|,\,\,\,a_{ij}^{+}=\sup\limits_{t\in\mathbb{T}}|a_{ij}(t)|,
\]
\[
b_{ij}^{+}=\sup\limits_{t\in\mathbb{T}}|b_{ij}(t)|,\,\,\,d_{ij}^{+}=\sup\limits_{t\in\mathbb{T}}|d_{ij}(t)|,\,\,\,
T_{ijl}^{+}=\sup\limits_{t\in\mathbb{T}}|T_{ijl}(t)|,
\,\,i,j,l=1,2,\dots,n.\]

The initial condition associated with system \eqref{a1} is of the form
\begin{eqnarray}\label{a2}
x_{i}(s)=\varphi_{i}(s),\,\,
x_{i}^{\nabla}(s)=\varphi_{i}^{\nabla}(s),\,\,\,s\in[-\theta,0]_{\mathbb{T}},
\end{eqnarray}
where $\theta=\max\{\delta,\tau,\sigma,\xi,\zeta\}$, $\delta=\max\limits_{1\leq i\leq n}\{\delta_{i}^{+}\}$, $\tau=\max\limits_{1\leq i,j\leq n}\{\tau_{ij}^{+}\}$, $\sigma=\max\limits_{1\leq i,j\leq n}\{\sigma_{ij}^{+}\}$,
$\xi=\max\limits_{1\leq i,j,l\leq n}\{\xi_{ijl}^{+}\}$,
$\zeta=\max\limits_{1\leq i,j,l\leq n}\{\zeta_{ijl}^{+}\}$, $i,j,l=1,2,\ldots,n$, $\varphi_{k}(\cdot)$ denotes a real-valued bounded $\nabla$-differentiable
function defined on $[-\theta,0]_{\mathbb{T}}$.

This paper is organized as follows. In Section 2, we introduce some definitions, make some preparations for later sections and   extend the almost periodic theory on time scales with the delta derivative to that with the nabla derivative. In Section 3, by utilizing
Banach's fixed point theorem and the theory of calculus on time scales, we present
some sufficient conditions for the existence of pseudo almost periodic solutions of $\eqref{a1}$.
In Section 4, we prove that the pseudo almost periodic solution obtained in Section 3 is globally
exponentially stable. In Section 5, we give an example to demonstrate the feasibility of our
results.

\section{Preliminaries}

\setcounter{section}{2}
\setcounter{equation}{0}
\indent

In this section, we shall first recall some fundamental definitions and lemmas. Also, we extend the almost periodic theory on time scales with the delta derivative to that with the nabla derivative.

A time scale $\mathbb{T}$
is an arbitrary nonempty closed subset of the real set $\mathbb{R}$ with the topology and ordering inherited from $\mathbb{R}$.
The forward jump operator
$\sigma:\mathbb{T}\rightarrow\mathbb{T}$ is defined by $\sigma(t)=\inf\big\{s\in \mathbb{T},s>t\big\}$ for all $t\in \mathbb{T}$,
while the backward jump operator $\rho:\mathbb{T}\rightarrow\mathbb{T}$ is defined by
$\rho(t)=\sup\big\{s\in \mathbb{T},s<t\big\}$ for all $t\in\mathbb{T}$.

A point $t\in\mathbb{T}$ is called left-dense if $t>\inf\mathbb{T}$
and $\rho(t)=t$, left-scattered if $\rho(t)<t$, right-dense if
$t<\sup\mathbb{T}$ and $\sigma(t)=t$, and right-scattered if
$\sigma(t)>t$. If $\mathbb{T}$ has a left-scattered maximum $m$,
then $\mathbb{T}^k=\mathbb{T}\setminus\{m\}$; otherwise
$\mathbb{T}^k=\mathbb{T}$. If $\mathbb{T}$ has a right-scattered
minimum $m$, then $\mathbb{T}_k=\mathbb{T}\setminus\{m\}$; otherwise
$\mathbb{T}_k=\mathbb{T}$. Finally, the backwards graininess function
$\nu: \mathbb{T}_{k}\rightarrow [0,\infty)$ is defined by $\nu(t)=t-\rho(t)$.

A function $f:\mathbb{T}\rightarrow\mathbb{R}$ is ld-continuous provided
it is continuous at left-dense point in $\mathbb{T}$ and its right-side
limits exist at right-dense points in $\mathbb{T}$.

\begin{definition}$(\cite{d8,d9})$
Let $f:\mathbb{T} \rightarrow \mathbb{R}$ be a function and
$t\in \mathbb{T}_{k}$. Then we define $f^{\nabla}(t)$ to be the
number (provided its exists) with the property that given any
$\varepsilon>0$, there is a neighborhood $U$ of
$t$ (i.e, $U=(t-\delta,t+\delta)\cap \mathbb{T}$ for some
$\delta>0$) such that
\[
|f(\rho(t))-f(s)-f^{\nabla}(t)(\rho(t)-s)|\leq\varepsilon|\rho(t)-s|
\]
for all $s\in U$, we call $f^{\nabla}(t)$ the nabla derivative of $f$ at $t$.
\end{definition}

Let $f:\mathbb{T} \rightarrow \mathbb{R}$ be ld-continuous. If $F^{\nabla}(t)=f(t)$, then we
define the nabla integral by $\int_a^{b}f(t)\nabla t=F(b)-F(a)$.

A function $p:\mathbb{T}\rightarrow\mathbb{R}$ is called $\nu$-regressive
if $1-\nu(t)p(t)\neq 0$ for all $t\in \mathbb{T}_k$. The set of all $\nu$-regressive and
left-dense continuous functions $p:\mathbb{T}\rightarrow\mathbb{R}$ will
be denoted by $\mathcal{R}_{\nu}=\mathcal{R}_{\nu}(\mathbb{T})=\mathcal{R}_{\nu}(\mathbb{T},\mathbb{R})$. We
define the set $\mathcal{R}_{\nu}^{+}=\mathcal{R}_{\nu}^{+}(\mathbb{T},\mathbb{R})=\{p\in \mathcal{R}_{\nu}:1-\nu(t)p(t)>0,\,\,\forall
t\in\mathbb{T}\}$.

If $p\in \mathcal {R}_{\nu}$, then we define the nabla exponential function by
\begin{equation*}
    \hat{e}_{p}(t,s)=\exp\bigg(\int_{s}^{t}\hat{\xi}_{\nu(\tau)}\big(p(\tau)\big)\nabla\tau\bigg),\,\,\mathrm{for}\,\,t,s\in\mathbb{T}
\end{equation*}
with the $\nu$-cylinder transformation
\[
\hat{\xi}_h(z)=\bigg\{\begin{array}{ll} -\frac{\mathrm{log}(1-hz)}{h} &{\rm
if}\,h\neq
0,\\
z &{\rm if}\,h=0.\\
\end{array}
\]
Let $p,q\in\mathcal{R}_{\nu}$, then we define a circle plus addition by
$(p \oplus_{\nu} q)(t)= p(t)+q(t)-\nu(t) p(t)q(t)$, for all $t\in\mathbb{T}_{k}$. For $p\in\mathcal{R}_{\nu}$, define a circle minus
$p$ by $\ominus_{\nu} p=-\frac{p}{1-\nu p}$.

\begin{lemma}$(\cite{d8,d9})$
Let $p,q\in\mathcal{R}_{\nu}$, and $s,t,r\in\mathbb{T}$. Then
\begin{itemize}
\item[$(i)$] $\hat{e}_{0}(t,s)\equiv 1$ and $\hat{e}_{p}(t,t)\equiv 1$;
\item[$(ii)$] $\hat{e}_{p}(\rho(t),s)=(1-\nu(t)p(t))\hat{e}_{p}(t,s)$;
\item[$(iii)$] $\hat{e}_{p}(t,s)=\frac{1}{\hat{e}_{p}(s,t)}=\hat{e}_{\ominus_{\nu} p}(s,t)$;
\item[$(iv)$] $\hat{e}_{p}(t,s)\hat{e}_{p}(s,r)=\hat{e}_{p}(t,r)$;
\item[$(v)$] $(\hat{e}_{p}(t,s))^{\nabla}=p(t)\hat{e}_{p}(t,s)$.
\end{itemize}
\end{lemma}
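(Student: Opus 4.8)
The plan is to derive all five properties directly from the definition $\hat{e}_p(t,s)=\exp\big(\int_s^t\hat{\xi}_{\nu(\tau)}(p(\tau))\nabla\tau\big)$, exploiting the additivity of the nabla integral and the algebraic structure of the $\nu$-cylinder transformation. I would dispatch $(i)$ and $(iv)$ first as immediate consequences of the integral representation: for $(i)$, note that $\hat{\xi}_h(0)=0$ for every $h$, so $\hat{e}_0(t,s)=\exp(0)=1$, while $\hat{e}_p(t,t)=\exp\big(\int_t^t\cdots\big)=1$; for $(iv)$, the additivity $\int_r^s+\int_s^t=\int_r^t$ inside the exponential gives $\hat{e}_p(t,s)\hat{e}_p(s,r)=\hat{e}_p(t,r)$ at once. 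The first equality in $(iii)$ is then the special case $\hat{e}_p(t,s)\hat{e}_p(s,t)=\hat{e}_p(t,t)=1$.

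For $(ii)$ I would use the single-step evaluation of the nabla integral: at a left-scattered point $t$ one has $\int_{\rho(t)}^t g(\tau)\nabla\tau=\nu(t)g(t)$. Writing $\hat{e}_p(\rho(t),s)/\hat{e}_p(t,s)=\exp\big(-\int_{\rho(t)}^t\hat{\xi}_{\nu(\tau)}(p(\tau))\nabla\tau\big)$ and substituting $\nu(t)\hat{\xi}_{\nu(t)}(p(t))=-\log(1-\nu(t)p(t))$ (which is exactly the reason the cylinder transformation carries the factor $-\tfrac{1}{h}\log(1-hz)$) produces the factor $1-\nu(t)p(t)$; the left-dense case $\nu(t)=0$ is trivial since then $\rho(t)=t$. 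The second equality in $(iii)$ rests on the pointwise identity $\hat{\xi}_{\nu}(\ominus_\nu p)=-\hat{\xi}_\nu(p)$, which I would verify by computing $1-\nu(\ominus_\nu p)=1-\nu\big(-\tfrac{p}{1-\nu p}\big)=\tfrac{1}{1-\nu p}$ and feeding this into the logarithm; replacing $p$ by $\ominus_\nu p$ in the exponent then flips its sign, giving $\hat{e}_{\ominus_\nu p}(s,t)=1/\hat{e}_p(s,t)$.

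The genuine obstacle is $(v)$, the statement that $\hat{e}_p(\cdot,s)$ solves the dynamic equation $y^\nabla=p(t)y$. Here I would argue by the local nature of the nabla derivative. At a left-dense point the nabla derivative coincides with the ordinary derivative and $\hat{\xi}_0(p(t))=p(t)$, so the ordinary chain rule applied to $\exp\big(\int_s^t\hat{\xi}_{\nu(\tau)}(p(\tau))\nabla\tau\big)$ yields $p(t)\hat{e}_p(t,s)$ directly. At a left-scattered point I would instead use the difference-quotient formula $f^\nabla(t)=\big(f(t)-f(\rho(t))\big)/\nu(t)$, into which property $(ii)$ feeds perfectly: $\big(\hat{e}_p(t,s)-(1-\nu(t)p(t))\hat{e}_p(t,s)\big)/\nu(t)=p(t)\hat{e}_p(t,s)$. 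Thus $(ii)$ is the crucial intermediate step that makes $(v)$ fall out without invoking a general chain rule on time scales, and I would therefore establish $(ii)$ before $(v)$. The whole argument is routine once the cylinder-transformation identities are in place, so the only care needed is consistent bookkeeping between the dense and scattered cases.
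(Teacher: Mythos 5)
Your proof is correct. Note, however, that the paper does not prove this lemma at all: it is quoted verbatim from the cited monographs of Bohner and Peterson (\cite{d8,d9}) as a known property of the nabla exponential, so there is no in-paper argument to compare against. Your self-contained derivation from the definition $\hat{e}_p(t,s)=\exp\big(\int_s^t\hat{\xi}_{\nu(\tau)}(p(\tau))\nabla\tau\big)$ is essentially the standard one: $(i)$ and $(iv)$ from additivity of the integral, $(iii)$ from $(iv)$ together with the identity $\hat{\xi}_{\nu}(\ominus_{\nu}p)=-\hat{\xi}_{\nu}(p)$ (your computation $1-\nu(\ominus_{\nu}p)=\tfrac{1}{1-\nu p}$ is right), $(ii)$ from the single-step formula $\int_{\rho(t)}^{t}g(\tau)\nabla\tau=\nu(t)g(t)$, and $(v)$ by splitting into the left-scattered case (where $(ii)$ gives the difference quotient immediately) and the left-dense case. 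The only point worth making explicit in the left-dense case of $(v)$ is that you are invoking the fundamental theorem of nabla calculus, i.e.\ that $\tau\mapsto\hat{\xi}_{\nu(\tau)}(p(\tau))$ is ld-continuous (which holds for ld-continuous $\nu$-regressive $p$) so that $F(t)=\int_s^t\hat{\xi}_{\nu(\tau)}(p(\tau))\nabla\tau$ satisfies $F^{\nabla}(t)=\hat{\xi}_{\nu(t)}(p(t))=p(t)$ there, after which a mean-value estimate for $\exp$ (rather than a literal ``ordinary chain rule,'' since $F$ is only defined on $\mathbb{T}$) completes the limit computation. These are minor bookkeeping matters; the structure and all the key identities in your argument are sound.
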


\begin{lemma}$(\cite{d8,d9})$ Let $f,g$ be nabla differentiable functions on $\mathbb{T}$, then
\begin{itemize}
\item[$(i)$] $(v_{1}f+v_{2}g)^{\nabla}=v_{1}f^{\nabla}+v_{2}g^{\nabla}$, for any constants $v_{1},v_{2}$;
\item[$(ii)$] $(fg)^{\nabla}(t)=f^{\nabla}(t)g(t)+f(\rho(t))g^{\nabla}(t)=f(t)g^{\nabla}(t)+f^{\nabla}(t)g(\rho(t))$;
\item[$(iii)$] If $f$ and $f^{\nabla}$ are continuous, then $(\int_{a}^{t}f(t,s)\nabla s)^{\nabla}=f(\rho(t),t)+\int_{a}^{t}f(t,s)\nabla s.$
\end{itemize}
\end{lemma}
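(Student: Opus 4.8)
The three parts are the linearity, product, and Leibniz rules of the nabla calculus, so the plan is to deduce each of them directly from the defining inequality of Definition 2.1: for every $\varepsilon>0$ there is a neighborhood $U$ of $t$ with $|f(\rho(t))-f(s)-f^{\nabla}(t)(\rho(t)-s)|\leq\varepsilon|\rho(t)-s|$ for all $s\in U$.

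For part $(i)$ I would fix $\varepsilon>0$, take the neighborhoods $U_{f}$ and $U_{g}$ furnished by Definition 2.1 for $f$ and $g$, and argue on the common neighborhood $U=U_{f}\cap U_{g}$. On $U$ the triangle inequality gives
\[
|(v_{1}f+v_{2}g)(\rho(t))-(v_{1}f+v_{2}g)(s)-(v_{1}f^{\nabla}(t)+v_{2}g^{\nabla}(t))(\rho(t)-s)|\leq(|v_{1}|+|v_{2}|)\varepsilon|\rho(t)-s|,
\]
and since $\varepsilon>0$ is arbitrary this exhibits $v_{1}f^{\nabla}(t)+v_{2}g^{\nabla}(t)$ as the nabla derivative of $v_{1}f+v_{2}g$. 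This step is entirely routine.

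For part $(ii)$ the key device is the add-and-subtract decomposition
\[
(fg)(\rho(t))-(fg)(s)=[f(\rho(t))-f(s)]g(s)+f(\rho(t))[g(\rho(t))-g(s)].
\]
I would insert the first-order expansions $f(\rho(t))-f(s)=f^{\nabla}(t)(\rho(t)-s)+\alpha(s)$ and $g(\rho(t))-g(s)=g^{\nabla}(t)(\rho(t)-s)+\beta(s)$, where $|\alpha(s)|,|\beta(s)|\leq\varepsilon|\rho(t)-s|$ on a suitable neighborhood, and then replace $g(s)$ by $g(t)$ using that a nabla-differentiable function is continuous at $t$, so $|g(s)-g(t)|$ can be made smaller than $\varepsilon$ near $t$. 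Collecting the error terms, the coefficient of $(\rho(t)-s)$ is $f^{\nabla}(t)g(t)+f(\rho(t))g^{\nabla}(t)$, which is the first form; the symmetric decomposition $f(s)[g(\rho(t))-g(s)]+[f(\rho(t))-f(s)]g(\rho(t))$ produces the second form $f(t)g^{\nabla}(t)+f^{\nabla}(t)g(\rho(t))$.

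Part $(iii)$ is where I expect the real work, and here I read the right-hand integrand as the partial nabla derivative $f^{\nabla}(t,s)$ in the first variable (the displayed $f(t,s)$ there being a typo). Setting $G(t)=\int_{a}^{t}f(t,s)\nabla s$, I would split into the left-scattered and left-dense cases. If $t$ is left-scattered, then $G^{\nabla}(t)=\frac{G(t)-G(\rho(t))}{\nu(t)}$; splitting the integral at $\rho(t)$ gives $G(t)-G(\rho(t))=\int_{\rho(t)}^{t}f(t,s)\nabla s+\int_{a}^{\rho(t)}[f(t,s)-f(\rho(t),s)]\nabla s$, and using the single-step identities $\int_{\rho(t)}^{t}f(t,s)\nabla s=\nu(t)f(t,t)$, $f(t,s)-f(\rho(t),s)=\nu(t)f^{\nabla}(t,s)$, and $f(t,t)=f(\rho(t),t)+\nu(t)f^{\nabla}(t,t)$ reassembles exactly $f(\rho(t),t)+\int_{a}^{t}f^{\nabla}(t,s)\nabla s$. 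If $t$ is left-dense, I would compute $G^{\nabla}(t)=\lim_{s\to t}\frac{G(t)-G(s)}{t-s}$ from the same splitting $\frac{1}{t-s}\int_{s}^{t}f(t,u)\nabla u+\frac{1}{t-s}\int_{a}^{s}[f(t,u)-f(s,u)]\nabla u$; the first piece tends to $f(t,t)=f(\rho(t),t)$ and the second to $\int_{a}^{t}f^{\nabla}(t,u)\nabla u$. The \textbf{main obstacle} is precisely the dense case: justifying that the limit may be passed inside the integral in the second piece, which is where the continuity hypothesis on $f^{\nabla}$ is essential, since it yields uniform convergence of the difference quotients $\frac{f(t,u)-f(s,u)}{t-s}$ to $f^{\nabla}(t,u)$ on the compact range of $u$.
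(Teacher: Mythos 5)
This lemma is quoted from the cited monographs of Bohner and Peterson and the paper supplies no proof of its own, so there is nothing internal to compare your argument against; judged on its merits, your proposal is correct and is essentially the standard textbook proof. Parts $(i)$ and $(ii)$ are exactly the classical arguments (the add-and-subtract decomposition in $(ii)$, together with the fact that nabla differentiability implies continuity so that $g(s)$ may be replaced by $g(t)$ and the error terms absorbed, is precisely how the product rule is proved in the references). You are also right that the displayed identity in $(iii)$ contains a typo: the integrand on the right-hand side must be the partial nabla derivative $f^{\nabla}(t,s)$ with respect to the first variable, and your left-scattered computation, using $\int_{\rho(t)}^{t}h(u)\nabla u=\nu(t)h(t)$ and $f(t,s)-f(\rho(t),s)=\nu(t)f^{\nabla}(t,s)$, reassembles the corrected formula exactly. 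The only place deserving a further word of care is the left-dense case: in Bohner--Peterson the uniformity of the estimate $|f(t,u)-f(s,u)-f^{\nabla}(t,u)(t-s)|\leq\varepsilon|t-s|$ in $u$ is taken as a \emph{hypothesis} of the Leibniz theorem rather than derived, whereas you derive it from continuity of $f^{\nabla}$; that derivation does work, but it requires $f^{\nabla}$ to be continuous \emph{jointly} in both variables on the compact set $[a,t]\times[a,t]$ (so that one can write $f(t,u)-f(s,u)=\int_{s}^{t}f^{\nabla}(\tau,u)\nabla\tau$ and invoke uniform continuity), and you should state that reading of the hypothesis explicitly. With that clarification, and noting that your splitting of $G(t)-G(s)$ is valid for $s$ on either side of $t$, the proof is complete.
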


\begin{lemma}$(\cite{d8,d9})$ Assume $p\in\mathcal{R}_{\nu}$ and $t_{0}\in\mathbb{T}$. If $1-\nu(t)p(t)>0$ for $t\in\mathbb{T}$, then $\hat{e}_{p}(t,t_{0})>0$ for all $t\in\mathbb{T}$.
\end{lemma}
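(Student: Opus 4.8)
The plan is to argue directly from the definition of the nabla exponential function. Recall that
\[
\hat{e}_{p}(t,t_{0})=\exp\bigg(\int_{t_{0}}^{t}\hat{\xi}_{\nu(\tau)}\big(p(\tau)\big)\nabla\tau\bigg),
\]
so it suffices to show that the exponent is a well-defined real number; the strict positivity of $\hat{e}_{p}(t,t_{0})$ then follows at once from the fact that the ordinary exponential maps $\mathbb{R}$ into $(0,\infty)$.

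First I would verify that the integrand $\hat{\xi}_{\nu(\tau)}(p(\tau))$ is real-valued for every $\tau\in\mathbb{T}_{k}$. At a left-dense point $\tau$ we have $\nu(\tau)=0$ and $\hat{\xi}_{0}(p(\tau))=p(\tau)\in\mathbb{R}$. At a left-scattered point $\tau$ we have $\nu(\tau)>0$ and $\hat{\xi}_{\nu(\tau)}(p(\tau))=-\nu(\tau)^{-1}\log\big(1-\nu(\tau)p(\tau)\big)$; here the hypothesis $1-\nu(\tau)p(\tau)>0$ guarantees that the argument of the logarithm is strictly positive, so the logarithm, and hence the whole expression, is a finite real number. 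Thus in either case the integrand takes real values.

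Next I would confirm that the nabla integral of this integrand exists, that is, that $\tau\mapsto\hat{\xi}_{\nu(\tau)}(p(\tau))$ is ld-continuous. Since $p\in\mathcal{R}_{\nu}$ is by definition ld-continuous and $\nu$-regressive, and since $(h,z)\mapsto\hat{\xi}_{h}(z)$ is continuous on the region $\{1-hz>0\}$, the composition inherits ld-continuity; consequently $\int_{t_{0}}^{t}\hat{\xi}_{\nu(\tau)}(p(\tau))\nabla\tau$ is a well-defined real number for every $t\in\mathbb{T}$. Applying the exponential then yields $\hat{e}_{p}(t,t_{0})>0$ for all $t\in\mathbb{T}$, completing the argument.

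The main obstacle I anticipate is this integrability verification rather than the positivity itself: one must use the correct real branch of the logarithm, which is exactly what the condition $1-\nu p>0$ secures, and one must observe that $\hat{\xi}_{h}(z)\to z$ as $h\to 0$ so that no discontinuity is introduced in passing between scattered and dense points. Alternatively, a more intrinsic argument is available through the identity $\hat{e}_{p}(\rho(t),t_{0})=(1-\nu(t)p(t))\hat{e}_{p}(t,t_{0})$ established in the preceding lemma: starting from $\hat{e}_{p}(t_{0},t_{0})=1>0$ and using that the factor $1-\nu(t)p(t)$ is strictly positive, one propagates positivity across scattered points and invokes continuity on the dense portions, though the direct exponential computation above is the cleaner route.
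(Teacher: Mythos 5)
Your proof is correct. The paper itself gives no proof of this lemma---it is quoted from the cited monographs of Bohner and Peterson---and your direct argument from the cylinder transformation (the hypothesis $1-\nu(t)p(t)>0$ keeps the logarithm real, so the exponent is a real number and $\hat{e}_{p}(t,t_{0})=\exp(\cdot)>0$) is exactly the standard argument used there.
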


\begin{lemma}\label{lem24} Suppose that $f(t)$ is an ld-continuous function
and $c(t)$ is a positive ld-continuous function which satisfies that
$c(t)\in\mathcal{R}_{\nu}^{+}$. Let
\begin{eqnarray*}
g(t)=\int_{t_{0}}^{t}\hat{e}_{-c}(t,\rho(s))f(s)\nabla s,
\end{eqnarray*}
where $t_{0}\in\mathbb{T}$, then
\begin{eqnarray*}
g^{\nabla}(t)=f(t)-c(t)\int_{t_{0}}^{t}\hat{e}_{-c}(t,\rho(s))f(s)\nabla s.
\end{eqnarray*}
\end{lemma}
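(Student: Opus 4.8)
The plan is to read the statement as a direct application of the Leibniz-type differentiation rule for nabla integrals in Lemma 2.2(iii), fed by the derivative formula for the nabla exponential function in Lemma 2.1(v). First I would introduce the two-variable integrand
\[
F(t,s)=\hat{e}_{-c}(t,\rho(s))\,f(s),
\]
so that $g(t)=\int_{t_{0}}^{t}F(t,s)\nabla s$. Before differentiating I would check that everything is well posed: since $c\in\mathcal{R}_{\nu}^{+}$ and $\nu(t)\geq 0$, $c(t)>0$, we have $1-\nu(t)\big(-c(t)\big)=1+\nu(t)c(t)\geq 1>0$, so $-c\in\mathcal{R}_{\nu}$ and $\hat{e}_{-c}$ is well defined; moreover, by the ld-continuity of $c$ and $f$ and the smoothness of the exponential, both $F$ and its nabla derivative in the first variable are ld-continuous, which is precisely the hypothesis required to apply Lemma 2.2(iii).

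Applying that rule (with the nabla derivative taken in the first variable $t$ under the integral sign) yields
\[
g^{\nabla}(t)=F(\rho(t),t)+\int_{t_{0}}^{t}F^{\nabla}(t,s)\nabla s.
\]
I would then handle the integral term using Lemma 2.1(v) with $p=-c$, which gives $\big(\hat{e}_{-c}(t,\rho(s))\big)^{\nabla}=-c(t)\hat{e}_{-c}(t,\rho(s))$; hence
\[
\int_{t_{0}}^{t}F^{\nabla}(t,s)\nabla s=-c(t)\int_{t_{0}}^{t}\hat{e}_{-c}(t,\rho(s))f(s)\nabla s=-c(t)g(t),
\]
which is exactly the second term in the claimed formula.

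It then remains to evaluate the boundary term $F(\rho(t),t)$: substituting $s=t$ into the second slot and $\rho(t)$ into the first slot of $F$ gives $F(\rho(t),t)=\hat{e}_{-c}(\rho(t),\rho(t))f(t)$, and since Lemma 2.1(i) asserts $\hat{e}_{p}(u,u)\equiv 1$, this collapses to $f(t)$. Combining the two contributions produces exactly $g^{\nabla}(t)=f(t)-c(t)\int_{t_{0}}^{t}\hat{e}_{-c}(t,\rho(s))f(s)\nabla s$, as stated. The one step I expect to be the main obstacle is the correct reading of this boundary term: one must notice that the first argument of $\hat{e}_{-c}$ is evaluated at $\rho(t)$ while the second argument $\rho(s)$ simultaneously collapses to $\rho(t)$ upon setting $s=t$, so that the two arguments coincide and the exponential equals $1$; mishandling either shift would spoil the clean cancellation. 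Everything else --- the regressivity of $-c$ and the ld-continuity hypotheses --- is routine.
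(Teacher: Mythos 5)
Your proof is correct, but it takes a genuinely different route from the paper's. You differentiate under the integral sign directly, invoking the two-variable Leibniz rule of Lemma 2.2(iii) applied to $F(t,s)=\hat{e}_{-c}(t,\rho(s))f(s)$, then evaluate the boundary term $F(\rho(t),t)=\hat{e}_{-c}(\rho(t),\rho(t))f(t)=f(t)$ and the integral term via $\big(\hat{e}_{-c}(t,\rho(s))\big)^{\nabla}=-c(t)\hat{e}_{-c}(t,\rho(s))$. The paper instead sidesteps the two-variable rule entirely: it uses the semigroup property $\hat{e}_{-c}(t,\rho(s))=\hat{e}_{-c}(t,t_{0})\hat{e}_{-c}(t_{0},\rho(s))$ (Lemma 2.1(iv)) to pull all $t$-dependence out of the integrand, writes $g(t)=\hat{e}_{-c}(t,t_{0})\int_{t_{0}}^{t}\hat{e}_{-c}(t_{0},\rho(s))f(s)\nabla s$, and then applies the single-variable product rule (Lemma 2.2(ii)) together with the fact that $\big(\int_{t_{0}}^{t}h(s)\nabla s\big)^{\nabla}=h(t)$; the boundary term there emerges as $\hat{e}_{-c}(\rho(t),t_{0})\hat{e}_{-c}(t_{0},\rho(t))f(t)=f(t)$. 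Your version is shorter and isolates exactly the same two contributions, but it leans on Lemma 2.2(iii), which as printed in the paper contains a typo (the integrand on the right-hand side should be $f^{\nabla}(t,s)$, the nabla derivative in the first variable); you read it in the corrected, standard form, which is the right call. The paper's factorization argument buys independence from that two-variable rule at the cost of one extra algebraic step. Your preliminary observation that $1-\nu(t)(-c(t))=1+\nu(t)c(t)>0$, so that $\hat{e}_{-c}$ is well defined and positive, is a useful check that the paper omits.
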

\begin{proof}
\begin{eqnarray*}
g^{\nabla}(t)&=&\bigg(\int_{t_{0}}^{t}\hat{e}_{-c}(t,\rho(s))f(s)\nabla s\bigg)^{\nabla}\\
&=&\bigg(\hat{e}_{-c}(t,t_{0})\int_{t_{0}}^{t}\hat{e}_{-c}(t_{0},\rho(s))f(s)\nabla s\bigg)^{\nabla}\\
&=&\hat{e}_{-c}(\rho(t),t_{0})\hat{e}_{-c}(t_{0},\rho(t))f(t)-c(t)\hat{e}_{-c}(t,t_{0})\int_{t_{0}}^{t}\hat{e}_{-c}(t_{0},\rho(s))f(s)\nabla s\\
&=&f(t)-c(t)\int_{t_{0}}^{t}\hat{e}_{-c}(t,\rho(s))f(s)\nabla s.
\end{eqnarray*}
\end{proof}
\begin{definition}$(\cite{43,d10})$
A time scale $\mathbb{T}$ is called an almost periodic time scale if
\begin{eqnarray*}
\Pi:=\{\tau\in \mathbb{R}:t\pm\tau\in \mathbb{T}, \forall t\in
\mathbb{T}\}\neq \{0\}.
\end{eqnarray*}
\end{definition}

\begin{definition}$(\cite{43,d10})$
Let $\mathbb{T}$ be an almost periodic time scale. A function $f \in
C(\mathbb{T},\mathbb{R}^{n})$ is called an almost periodic on $\mathbb{T}$,
if for any $\varepsilon >0$, the set
\begin{eqnarray*}
E(\varepsilon,f)=\{{\tau \in
\Pi:|f(t+\tau)-f(t)|<\varepsilon,\forall t\in \mathbb{T}}\}
\end{eqnarray*}
is a relatively dense in $\mathbb{T}$; that is, for any given $\varepsilon>0$,
there exists a constant $l(\varepsilon)>0$ such that each interval of
length $l(\varepsilon)$ contains at least one $\tau=\tau(\varepsilon)\in
E(\varepsilon,f)$ such that
\begin{eqnarray*}
|f(t+\tau)-f(t)|<\varepsilon,\,\, \forall t\in \mathbb{T}.
\end{eqnarray*}
The set $E(\varepsilon,f)$ is called the $\varepsilon$-translation set of $f(t)$,
$\tau$ is called the $\varepsilon$-translation number of $f(t)$ and
$l(\varepsilon)$ is called the contain interval length of $E(\varepsilon,f)$.
\end{definition}

Let $AP(\mathbb{T})=\{f\in C(\mathbb{T},\mathbb{R}^{n}): f$ is almost periodic\} and $BC(\mathbb{T},\mathbb{R}^{n})$ denote the
space of all bounded continuous functions from $\mathbb{T}$ to $\mathbb{R}^{n}$. Define the class of functions $PAP_{0}(\mathbb{T})$ as follows:
\begin{eqnarray*}
PAP_{0}(\mathbb{T})&=&\bigg\{f\in BC(\mathbb{T},\mathbb{R}^{n}):f\,\,is\,\,\nabla-measurable\,\,such\,\, that\\ &&\lim\limits_{r\rightarrow+\infty}\frac{1}{2r}\int_{t_{0}-r}^{t_{0}+r}|f(s)|\nabla s=0,\,\,where\,\,t_{0}\in\mathbb{T}, r\in\Lambda\bigg\}.
\end{eqnarray*}
Similar to Definition 4.1 in \cite{39}, we give
\begin{definition}\label{def25}  A function $f\in C(\mathbb{T},\mathbb{R}^{n})$ is called pseudo almost periodic if $f=g+\phi$,
where $g\in AP(\mathbb{T})$ and $\phi\in PAP_{0}(\mathbb{T})$. Denote by $PAP(\mathbb{T})$ the set of pseudo almost periodic functions.
\end{definition}
By Definition \ref{def25}, one can easily show that
\begin{lemma}\label{lem25}
 If $f,g\in PAP(\mathbb{T})$, then $f+g,fg\in PAP(\mathbb{T})$; if $f\in PAP(\mathbb{T}),\,
g\in AP(\mathbb{T})$, then $fg\in PAP(\mathbb{T})$.
\end{lemma}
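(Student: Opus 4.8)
The plan is to exploit the decomposition guaranteed by Definition \ref{def25}: write $f=f_{1}+\phi_{1}$ and $g=g_{1}+\phi_{2}$ with $f_{1},g_{1}\in AP(\mathbb{T})$ and $\phi_{1},\phi_{2}\in PAP_{0}(\mathbb{T})$, and then verify each assertion by isolating an almost periodic part and a part lying in $PAP_{0}(\mathbb{T})$. Throughout I would rely on two elementary facts: every $h\in AP(\mathbb{T})$ is bounded, so that $\|h\|_{\infty}:=\sup_{t\in\mathbb{T}}|h(t)|<\infty$, and for any finite family of almost periodic functions the intersection of their $\varepsilon$-translation sets is again relatively dense in $\Pi$. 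Both are standard consequences of the definition of almost periodicity on $\mathbb{T}$ and are available from the time-scale almost periodicity theory in \cite{43,d10}.

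For the additive statement I would write $f+g=(f_{1}+g_{1})+(\phi_{1}+\phi_{2})$. That $f_{1}+g_{1}\in AP(\mathbb{T})$ follows by choosing $\tau$ in the common translation set $E(\varepsilon/2,f_{1})\cap E(\varepsilon/2,g_{1})$ and applying the triangle inequality to $|(f_{1}+g_{1})(t+\tau)-(f_{1}+g_{1})(t)|$. That $\phi_{1}+\phi_{2}\in PAP_{0}(\mathbb{T})$ follows because
\[
\frac{1}{2r}\int_{t_{0}-r}^{t_{0}+r}|\phi_{1}(s)+\phi_{2}(s)|\nabla s\leq \frac{1}{2r}\int_{t_{0}-r}^{t_{0}+r}|\phi_{1}(s)|\nabla s+\frac{1}{2r}\int_{t_{0}-r}^{t_{0}+r}|\phi_{2}(s)|\nabla s\longrightarrow 0
\]
as $r\to+\infty$, while the sum is plainly bounded, continuous and $\nabla$-measurable. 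Hence $f+g\in PAP(\mathbb{T})$.

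For the product I would expand $fg=f_{1}g_{1}+(f_{1}\phi_{2}+\phi_{1}g_{1}+\phi_{1}\phi_{2})$ and show that the first term is almost periodic while the bracketed remainder lies in $PAP_{0}(\mathbb{T})$. The term $f_{1}\phi_{2}$ satisfies $|f_{1}(s)\phi_{2}(s)|\leq\|f_{1}\|_{\infty}|\phi_{2}(s)|$, so its mean over $[t_{0}-r,t_{0}+r]_{\mathbb{T}}$ is dominated by $\|f_{1}\|_{\infty}$ times the mean of $|\phi_{2}|$ and therefore vanishes as $r\to+\infty$; the same estimate, using the boundedness of $g_{1}$ and of $\phi_{1}$, places $\phi_{1}g_{1}$ and $\phi_{1}\phi_{2}$ in $PAP_{0}(\mathbb{T})$. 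The additive closure of $PAP_{0}(\mathbb{T})$ just established then puts the whole bracket in $PAP_{0}(\mathbb{T})$. The final statement of the lemma is exactly the special case $\phi_{2}=0$ of this computation.

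The one step that is not purely formal, and which I expect to be the main obstacle, is showing $f_{1}g_{1}\in AP(\mathbb{T})$. Here I would fix $\varepsilon>0$, set $M=\max\{\|f_{1}\|_{\infty},\|g_{1}\|_{\infty},1\}$, choose $\tau$ in the set $E(\varepsilon/(2M),f_{1})\cap E(\varepsilon/(2M),g_{1})$, and estimate
\[
|f_{1}(t+\tau)g_{1}(t+\tau)-f_{1}(t)g_{1}(t)|\leq |f_{1}(t+\tau)|\,|g_{1}(t+\tau)-g_{1}(t)|+|g_{1}(t)|\,|f_{1}(t+\tau)-f_{1}(t)|<\varepsilon
\]
uniformly in $t\in\mathbb{T}$, which shows $E(\varepsilon,f_{1}g_{1})$ is relatively dense. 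The delicate ingredient is precisely the relative density of the intersection $E(\varepsilon/(2M),f_{1})\cap E(\varepsilon/(2M),g_{1})$ in $\Pi$, which rests on the time-scale analogue of the classical result that the common $\varepsilon$-translation numbers of finitely many almost periodic functions form a relatively dense set; I would invoke this from \cite{43,d10}. Combining the three pieces then yields $fg\in PAP(\mathbb{T})$, completing the proof.
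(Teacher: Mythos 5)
Your proof is correct. The paper itself gives no argument for this lemma---it merely asserts that it follows easily from Definition 2.7---and the decomposition you supply (splitting each function into its $AP(\mathbb{T})$ and $PAP_{0}(\mathbb{T})$ parts, using the relative density of common $\varepsilon$-translation sets to handle $f_{1}+g_{1}$ and $f_{1}g_{1}$, and the boundedness of almost periodic and $PAP_{0}$ functions to show the cross terms have vanishing ergodic mean) is exactly the standard route the authors evidently have in mind, with the one genuinely nontrivial ingredient (density of the intersection of translation sets) correctly identified and attributed to the time-scale almost periodicity theory in \cite{43,d10}.
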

\begin{lemma} If $f\in C(\mathbb{R},\mathbb{R})$ satisfies the Lipschitcz condition, $\varphi\in PAP(\mathbb{T})$ and $\theta\in C(\mathbb{T},\Lambda)$, then $f(\varphi(t-\theta(t)))\in PAP(\mathbb{T})$.
\end{lemma}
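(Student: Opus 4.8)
The plan is to exploit the additive structure of pseudo almost periodicity together with the Lipschitz property of $f$. Using Definition \ref{def25}, write $\varphi=\varphi^{a}+\varphi^{0}$ with $\varphi^{a}\in AP(\mathbb{T})$ and $\varphi^{0}\in PAP_{0}(\mathbb{T})$, and split the target function as
\[
f(\varphi(t-\theta(t)))=f\big(\varphi^{a}(t-\theta(t))\big)+\Big[f\big(\varphi(t-\theta(t))\big)-f\big(\varphi^{a}(t-\theta(t))\big)\Big]=:u(t)+v(t).
\]
I would prove $u\in AP(\mathbb{T})$ and $v\in PAP_{0}(\mathbb{T})$ separately, so that $f(\varphi(t-\theta(t)))=u+v\in PAP(\mathbb{T})$ follows at once from Definition \ref{def25}.

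For the almost periodic piece $u$, recall that an almost periodic function on $\mathbb{T}$ is bounded and uniformly continuous, so $\varphi^{a}$ takes values in a compact interval $K\subset\mathbb{R}$ and $f|_{K}$ is uniformly continuous. I would first show that the delayed function $t\mapsto\varphi^{a}(t-\theta(t))$ is again almost periodic: given $\varepsilon>0$ one works with the $\varepsilon$-translation set $E(\varepsilon,\varphi^{a})$ of $\varphi^{a}$, uses that $t-\theta(t)\in\mathbb{T}$ (which is exactly the content of $\theta\in C(\mathbb{T},\Lambda)$), and transfers relative density from $E(\varepsilon,\varphi^{a})$ to the composite. Composing the resulting almost periodic function with the uniformly continuous $f$ then keeps it in $AP(\mathbb{T})$ by the standard composition argument: a $\delta$-translation number of the inner function becomes an $\varepsilon$-translation number of $u$ through the uniform continuity of $f$, and $E(\delta,\cdot)$ is relatively dense.

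For the ergodic piece $v$, boundedness of $f,\varphi,\varphi^{a}$ gives $v\in BC(\mathbb{T},\mathbb{R})$, and the Lipschitz condition (with constant $L$) yields the pointwise bound $|v(t)|\le L\,|\varphi^{0}(t-\theta(t))|$. Hence it suffices to verify $\varphi^{0}(\cdot-\theta(\cdot))\in PAP_{0}(\mathbb{T})$, i.e. that $\frac{1}{2r}\int_{t_{0}-r}^{t_{0}+r}|\varphi^{0}(s-\theta(s))|\,\nabla s\to 0$ as $r\to+\infty$. \textbf{This last step is where I expect the main obstacle to lie.} Unlike a constant shift, the variable delay $s\mapsto s-\theta(s)$ does not preserve the normalized mean for free, and on a general time scale there is no clean substitution formula as on $\mathbb{R}$. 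The plan is to use the boundedness $0\le\theta(s)\le\theta^{+}:=\sup_{t\in\mathbb{T}}\theta(t)<\infty$ (which holds for the delays in the model) to confine $s-\theta(s)$ to a window differing from $[t_{0}-r,t_{0}+r]$ only by a length controlled by $\theta^{+}$, and to compare the shifted integral with the vanishing mean of $|\varphi^{0}|$ over the enlarged window, absorbing the boundary discrepancy via $\sup_{\mathbb{T}}|\varphi^{0}|<\infty$ after dividing by $2r$. Controlling the change of variables induced by $\theta$ in this time-scale setting is the delicate technical point; it is driven by the boundedness of both $\theta$ and $\varphi^{0}$ rather than by any smoothness of $\theta$. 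Once $v\in PAP_{0}(\mathbb{T})$ is secured, the decomposition $u+v$ completes the argument.
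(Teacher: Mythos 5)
Your decomposition is exactly the paper's: the paper writes $E(t)=f(\varphi_1(t-\theta(t)))+\big[f(\varphi_1(t-\theta(t))+\varphi_2(t-\theta(t)))-f(\varphi_1(t-\theta(t)))\big]=:E_1(t)+E_2(t)$, handles $E_1$ by citing a composition theorem for almost periodic functions on time scales (Theorem 2.11 of \cite{43}), and bounds $|E_2(s)|\le L|\varphi_2(s-\theta(s))|$ by the Lipschitz condition --- precisely your $u+v$ splitting. So in structure the two arguments coincide.

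The step you single out as the main obstacle, namely showing $\frac{1}{2r}\int_{t_0-r}^{t_0+r}|\varphi_2(s-\theta(s))|\,\nabla s\to 0$, is exactly the step the paper does not prove: it simply writes the limit as $0$ in the last line of its chain of inequalities. Your instinct that this is delicate is correct, but your proposed remedy (confine $s-\theta(s)$ to a window of length $2r+O(\theta^{+})$ and compare with the vanishing mean of $|\varphi_2|$ over the enlarged window) cannot work as stated, because the difficulty is not the size of the window but the possible concentration of the map $s\mapsto s-\theta(s)$. Already on $\mathbb{T}=\mathbb{R}$: take $\varphi_2$ to be a sum of continuous bumps of height $1$ centered at the integers $n$ with widths $2^{-|n|}$, which lies in $PAP_{0}(\mathbb{R})$, and a continuous $\theta$ with $0\le\theta\le 1$ chosen so that $s-\theta(s)\equiv n$ for $s\in[n,n+\tfrac12]$; then $|\varphi_2(s-\theta(s))|=1$ on a set of density $\tfrac12$, so the composition is not in $PAP_{0}(\mathbb{R})$ even though $\theta$ and $\varphi_2$ are bounded. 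Closing this step requires an additional ingredient (for instance uniform continuity of the $PAP_{0}$ component, or a constant/translation-compatible delay) rather than boundedness alone. In short: you follow the same route as the paper; the gap you flagged is genuinely present, in your plan and in the paper's own proof alike, and the patch you sketch does not close it.
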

\begin{proof}
From Definition \ref{def25}, we have $\varphi=\varphi_{1}+\varphi_{2}$, where $\varphi_{1}\in AP(\mathbb{T})$ and $\varphi_{2}\in PAP_{0}(\mathbb{T})$. Set
\begin{eqnarray*}
E(t)&=&f(\varphi(t-\theta(t)))=f(\varphi_{1}(t-\theta(t)))+[f(\varphi_{1}(t-\theta(t))+\varphi_{2}(t-\theta(t)))-f(\varphi_{1}(t-\theta(t)))]\\
&:=&E_{1}(t)+E_{2}(t).
\end{eqnarray*}
Firstly, it follows from Theorem 2.11 in \cite{43} that  $E_{1}\in AP(\mathbb{T})$.  Then, we show that $E_{2}\in PAP_{0}(\mathbb{T})$ because
\begin{eqnarray*}
&&\lim\limits_{r\rightarrow+\infty}\frac{1}{2r}\int_{t_{0}-r}^{t_{0}+r}|E_{2}(s)|\nabla s\\
&=&\lim\limits_{r\rightarrow+\infty}\frac{1}{2r}\int_{t_{0}-r}^{t_{0}+r}|f(\varphi_{1}(s-\theta(s))+\varphi_{2}(s-\theta(s)))
-f(\varphi_{1}(s-\theta(s)))|\nabla s\\
&\leq&\lim\limits_{r\rightarrow+\infty}\frac{L}{2r}\int_{t_{0}-r}^{t_{0}+r}|\varphi_{2}(s-\theta(s))|\nabla s=0.
\end{eqnarray*}
Thus $E_{2}\in PAP_{0}(\mathbb{T})$. So $E\in PAP(\mathbb{T})$. The proof is complete.
\end{proof}

Similar to Definition 2.12 in \cite{43}, we give
\begin{definition}
Let $A(t)$ be an $n\times n$ matrix-valued function on $\mathbb{T}$. Then the linear system
\begin{eqnarray}\label{e21}
x^{\nabla}(t)=A(t)x(t),\,\, t\in\mathbb{T}
\end{eqnarray}
is said to admit an exponential dichotomy on $\mathbb{T}$ if there
exist positive constant $K, \alpha$, projection $P$ and the
fundamental solution matrix $X(t)$ of \eqref{e21}, satisfying
\begin{eqnarray*}
\|X(t)PX^{-1}(s)\|_{0} \leq K\hat{e}_{\ominus_{\nu}
\alpha}(t,s),\,\,
s, t \in\mathbb{T}, t \geq s,\\
\|X(t)(I-P)X^{-1}(s)\|_{0} \leq K\hat{e}_{\ominus_{\nu}
\alpha}(s,t),\,\, s, t \in\mathbb{T}, t \leq s,
\end{eqnarray*}
where $\|\cdot\|_{0}$ is a matrix norm on $\mathbb{T}$ $($say, for
example, if $A=(a_{ij})_{n\times m}$, then we can take
$\|A\|_{0}=(\sum\limits_{i=1}^{n}\sum\limits_{j=1}^{m}|a_{ij}|^{2})^{\frac{1}{2}})$.
\end{definition}

Consider the following almost periodic system
\begin{eqnarray}\label{e22}
x^{\nabla}(t)=A(t)x(t)+f(t),\,\, t \in \mathbb{T},
\end{eqnarray}
where $A(t)$ is an almost periodic matrix function, $f(t)$ is an
almost periodic vector function.
Similar to the proof of Theorem 5.2 in \cite{39}, we can get the following   lemma.
\begin{lemma}\label{lem27} Suppose that $A(t)$ is almost periodic, \eqref{e21} admits an exponential
dichotomy and the function $f\in PAP(\mathbb{T})$. Then \eqref{e22} has a unique bounded solution
$x\in PAP(\mathbb{T})$ that can be expressed as follows:
\[
x(t)=\int_{-\infty}^{t}X(t)PX^{-1}(\rho(s))f(s)\nabla
s-\int_t^{+\infty}X(t)(I-P)X^{-1}(\rho(s))f(s)\nabla s,
\]
where $X(t)$ is the fundamental solution matrix of \eqref{e21}.
\end{lemma}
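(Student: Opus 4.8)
The plan is to prove Lemma \ref{lem27} in three stages: first verify that the proposed integral formula actually defines a bounded function that solves \eqref{e22}, then show this solution lies in $PAP(\mathbb{T})$, and finally establish uniqueness among bounded solutions. Since the excerpt tells us to follow the proof of Theorem 5.2 in \cite{39}, I would lean heavily on the exponential dichotomy estimates together with the nabla-calculus machinery in Lemma 2.2 and Lemma \ref{lem24}.

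\textbf{Well-definedness and the solution property.} First I would check the two integrals converge. Writing $x(t)=\int_{-\infty}^{t}X(t)PX^{-1}(\rho(s))f(s)\nabla s-\int_{t}^{+\infty}X(t)(I-P)X^{-1}(\rho(s))f(s)\nabla s$, I would use the dichotomy bounds $\|X(t)PX^{-1}(\rho(s))\|_{0}\leq K\hat{e}_{\ominus_{\nu}\alpha}(t,\rho(s))$ for $t\geq \rho(s)$ and the companion bound on the complementary projection, together with $\|f\|_{\infty}<\infty$ (as $f\in PAP(\mathbb{T})\subset BC$). The exponential decay of $\hat{e}_{\ominus_{\nu}\alpha}$ then gives absolute convergence and a uniform bound $\|x\|_{\infty}\leq \tfrac{2K}{\alpha}\|f\|_{\infty}$ (up to the precise constant coming from integrating the exponential). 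To see that $x$ solves \eqref{e22}, I would differentiate under the integral sign using the product-rule and Leibniz-type formulas of Lemma 2.2(iii) and the structure exploited in Lemma \ref{lem24}; the fundamental-matrix identity $X^{\nabla}(t)=A(t)X(t)$ produces the term $A(t)x(t)$, and the two boundary contributions at $s=t$ combine through $X(t)PX^{-1}(\rho(t))+X(t)(I-P)X^{-1}(\rho(t))=X(t)X^{-1}(\rho(t))$, which after accounting for the $\rho(t)$ and the graininess yields exactly the inhomogeneous term $f(t)$.

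\textbf{Pseudo almost periodicity.} Next I would show $x\in PAP(\mathbb{T})$. Decompose $f=g+\phi$ with $g\in AP(\mathbb{T})$ and $\phi\in PAP_{0}(\mathbb{T})$ as in Definition \ref{def25}, and split $x=x_{g}+x_{\phi}$ correspondingly, where each piece is given by the same integral formula with $f$ replaced by $g$ or $\phi$. For $x_{g}$ I would invoke the almost periodic analogue (this is essentially the content carried over from \cite{43,39}): the integral of an almost periodic forcing against an exponentially dichotomic kernel is again almost periodic, proved by estimating $\|x_{g}(t+\tau)-x_{g}(t)\|$ using a common $\varepsilon$-translation number $\tau$ of both $A$ and $g$. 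For $x_{\phi}$ I would show it belongs to $PAP_{0}(\mathbb{T})$ by bounding $\tfrac{1}{2r}\int_{t_{0}-r}^{t_{0}+r}\|x_{\phi}(t)\|\nabla t$, interchanging the order of nabla-integration (Fubini on time scales), using the exponential kernel to control the inner integral, and applying the defining mean-zero property of $\phi$ together with a dominated-convergence argument as $r\to+\infty$.

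\textbf{Uniqueness and the main obstacle.} For uniqueness, if $x_{1},x_{2}$ are two bounded solutions then $y=x_{1}-x_{2}$ solves the homogeneous equation \eqref{e21}, so $y(t)=X(t)y(t_{0})$ for appropriate data; projecting onto $\mathrm{range}(P)$ and $\mathrm{range}(I-P)$ and letting the respective endpoints go to $\pm\infty$, the dichotomy decay forces both components of $y$ to vanish, whence $x_{1}\equiv x_{2}$. I expect the genuine technical obstacle to be the $PAP_{0}$ claim for $x_{\phi}$: the mean-zero condition is an averaging statement, and transferring it through the double nabla-integral requires a careful Fubini-type interchange valid for $\nabla$-measurable functions on $\mathbb{T}$ plus a uniform-in-$r$ domination so that the limit $r\to+\infty$ can be passed inside. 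The almost periodic part $x_{g}$ and the solution-verification step are comparatively routine given Lemma 2.2 and Lemma \ref{lem24}, but this averaging argument is where the real work lies and where the analogy with \cite{39} must be checked most carefully.
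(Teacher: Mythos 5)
Your outline is correct and follows essentially the same route the paper relies on: the paper gives no proof of this lemma, deferring to Theorem 5.2 of \cite{39}, and that argument is exactly your three-stage scheme (dichotomy estimates for boundedness and for verifying the solution property, the decomposition $f=g+\phi$ with the almost periodic part handled by common $\varepsilon$-translation numbers and the $PAP_{0}$ part by a Fubini/averaging estimate, and uniqueness via the homogeneous equation under the dichotomy). Your identification of the $PAP_{0}$ averaging step as the one requiring the most care, and your remark that the boundary terms at $s=t$ must be evaluated at $\rho(t)$ so that $X(\rho(t))X^{-1}(\rho(t))f(t)=f(t)$, are both consistent with how that reference proceeds.
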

Similar to the proof of Lemma 2.15 in \cite{43}, we have
\begin{lemma}\label{lem28}
Let $c_{i}(t):\mathbb{T}\rightarrow\mathbb{R}^{+}$ be an almost periodic function, $-c_{i}\in \mathcal{R}_{\nu}^{+}$,
$\mathbb{T}\in \Lambda$ and
\[m(c_{i})=\lim\limits_{T\rightarrow\infty}\frac{1}{T}\int_{t}^{t+T}c_{i}(s)\nabla s> 0,\,\,\,i=1,2,\ldots,n.\]
Then the linear system
\begin{eqnarray}\label{e23}
x^{\nabla}(t)=\mathrm{diag}(-c_{1}(t),-c_{2}(t),\dots,-c_{n}(t))x(t)
\end{eqnarray}
admits an exponential dichotomy on $\mathbb{T}$, where $m(c_{i})$ denote the mean-value of $c_{i}$, $i=1,2,\ldots,n$.
\end{lemma}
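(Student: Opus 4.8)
The plan is to exploit the diagonal structure of \eqref{e23}. First I would write the fundamental solution matrix explicitly. Since the system decouples, Lemma 2.1$(v)$ shows that
\[
X(t)=\mathrm{diag}\big(\hat{e}_{-c_{1}}(t,t_{0}),\ldots,\hat{e}_{-c_{n}}(t,t_{0})\big),\qquad t_{0}\in\mathbb{T},
\]
is a fundamental solution matrix. I would then choose the projection $P=I$. With this choice $I-P=0$, so the second dichotomy inequality is trivially satisfied (its left-hand side vanishes), while by Lemma 2.1$(iii)$--$(iv)$ the matrix $X(t)PX^{-1}(s)=X(t)X^{-1}(s)$ is diagonal with positive entries $\hat{e}_{-c_{i}}(t,s)$ (positivity from $-c_{i}\in\mathcal{R}_{\nu}^{+}$ and Lemma 2.3). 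Since $\|\cdot\|_{0}$ of a diagonal matrix is controlled by $\sqrt{n}$ times its largest entry, the whole statement reduces to the scalar estimate
\[
\hat{e}_{-c_{i}}(t,s)\leq K\,\hat{e}_{\ominus_{\nu}\alpha}(t,s),\qquad t\geq s,\ i=1,\ldots,n,
\]
for suitable constants $K,\alpha>0$.

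For the choice of $\alpha$, I would fix $\alpha>0$ small (with $0<\alpha<\min_{1\leq i\leq n}m(c_{i})$) so that $\alpha,\ominus_{\nu}\alpha\in\mathcal{R}_{\nu}^{+}$; this is possible because $\mathbb{T}\in\Lambda$ provides well-defined uniform mean values and a bounded graininess $\bar{\nu}=\sup_{t\in\mathbb{T}}\nu(t)<\infty$. The natural device is to rewrite the target ratio as a single nabla exponential,
\[
\frac{\hat{e}_{-c_{i}}(t,s)}{\hat{e}_{\ominus_{\nu}\alpha}(t,s)}=\hat{e}_{(-c_{i})\oplus_{\nu}\alpha}(t,s),
\]
using Lemma 2.1$(iii)$ together with $\ominus_{\nu}(\ominus_{\nu}\alpha)=\alpha$. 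A direct computation gives $1-\nu(t)\big((-c_{i})\oplus_{\nu}\alpha\big)(t)=(1-\nu(t)\alpha)\big(1+\nu(t)c_{i}(t)\big)$, whence the exponent splits as $\hat{\xi}_{\nu(\tau)}\big((-c_{i})\oplus_{\nu}\alpha\big)=\hat{\xi}_{\nu(\tau)}(\alpha)+\hat{\xi}_{\nu(\tau)}(-c_{i})$. Thus it suffices to show that $\int_{s}^{t}\big[\hat{\xi}_{\nu(\tau)}(\alpha)+\hat{\xi}_{\nu(\tau)}(-c_{i}(\tau))\big]\nabla\tau$ is bounded above uniformly for $t\geq s$.

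The crux is to control this integral uniformly. I would bound the two pieces by elementary inequalities: $\hat{\xi}_{\nu}(\alpha)=-\nu^{-1}\log(1-\nu\alpha)\leq \alpha/(1-\bar{\nu}\alpha)$ and $\hat{\xi}_{\nu}(-c_{i})=-\nu^{-1}\log(1+\nu c_{i})\leq -c_{i}/(1+\bar{\nu}c_{i}^{+})$, both remaining valid in the limiting case $\nu=0$. Hence the integrand is dominated by $\frac{\alpha}{1-\bar{\nu}\alpha}-\frac{c_{i}(\tau)}{1+\bar{\nu}c_{i}^{+}}$, whose mean value $\frac{\alpha}{1-\bar{\nu}\alpha}-\frac{m(c_{i})}{1+\bar{\nu}c_{i}^{+}}$ is strictly negative once $\alpha$ is small enough. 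Because $c_{i}$ is almost periodic, the time averages $\frac{1}{T}\int_{s}^{s+T}c_{i}(\tau)\nabla\tau$ converge to $m(c_{i})$ \emph{uniformly} in $s$; this uniformity, together with $m(c_{i})>0$, upgrades the strictly negative mean into the desired bound $\int_{s}^{t}\big[\hat{\xi}_{\nu}(\alpha)+\hat{\xi}_{\nu}(-c_{i})\big]\nabla\tau\leq\log K$ for all $t\geq s$ with a single constant $K$, completing the scalar estimate and hence the proof.

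The step I expect to be the main obstacle is this last one: passing from the additive, mean-value information about $c_{i}$ to a uniform multiplicative decay bound on $\hat{e}_{-c_{i}}(t,s)$ through the nonlinear cylinder transformation $\hat{\xi}_{\nu}$, while keeping every constant independent of both endpoints $s$ and $t$ and handling the possibly nonconstant graininess $\nu$. The uniformity of the averaging for almost periodic functions is exactly what makes the argument go through, and verifying it carefully in the nabla setting is the delicate part.
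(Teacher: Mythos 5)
Your argument is correct and is essentially the standard one: the paper itself gives no proof of this lemma, merely asserting it is ``similar to the proof of Lemma 2.15 in \cite{43}'', and that reference's argument is exactly your route --- take $P=I$ for the decoupled diagonal system, reduce to the scalar bound $\hat{e}_{-c_{i}}(t,s)\le K\hat{e}_{\ominus_{\nu}\alpha}(t,s)$, and obtain it from the uniformly convergent time averages of the almost periodic $c_{i}$ via the cylinder transformation. Your supporting observations all check out (in particular $\sup_{t}\nu(t)<\infty$ follows from $\Pi\neq\{0\}$, and the identity $1-\nu\big((-c_{i})\oplus_{\nu}\alpha\big)=(1-\nu\alpha)(1+\nu c_{i})$ is verified directly), so the proposal is a complete proof of what the paper leaves to the citation.
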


\section{The existence of pseudo almost periodic solution}

\setcounter{equation}{0}

\indent

In this section, we will state and prove the sufficient conditions for the existence of
pseudo almost periodic solutions of \eqref{a1}.

Let
\begin{eqnarray*}
\mathbb{B}=\big\{\varphi(t)=(\varphi_{1}(t),\varphi_{2}(t),\ldots,\varphi_{n}(t))^{T}:
\varphi_{i}(t),\varphi^{\nabla}_{i}(t)\in PAP(\mathbb{T}),\,\,i=1,2,\ldots,n\big\}
\end{eqnarray*}
 with the norm
$\|\varphi\|_{\mathbb{B}}=\sup\limits_{t\in\mathbb{T}}\|\varphi(t)\|$, where $\|\varphi(t)\|=\max\limits_{1\leq i\leq n}\{|\varphi_{i}(t)|,|\varphi^{\nabla}_{i}(t)|\}$, then $\mathbb{B}$ is a Banach space.

Throughout this paper, we assume that the following conditions hold:
\begin{itemize}
\item[$(H_{1})$] $c_{i}\in C(\mathbb{T},\mathbb{R}^{+})$ with $-c_{i}\in\mathcal{R}_{\nu}^{+}$, where $\mathcal {R}_{\nu}^{+}$ denotes the set of positively regressive functions from $\mathbb{T}$ to $\mathbb{R}$, $i=1,2,\dots,n$;
\item[$(H_{2})$] $a_{ij},b_{ij},d_{ij},T_{ijl}\in AP(\mathbb{T})$, $\delta_{i}, \tau_{ij}, \sigma_{ij}, \xi_{ijl}, \zeta_{ijl}\in C(\mathbb{T},\Lambda)$ and $I_{i}\in PAP(\mathbb{T})$;
\item[$(H_{3})$] Functions $f_{j},g_{j},h_{j},k_{j}\in C(\mathbb{R},\mathbb{R})$ and there exist positive constants $L_{j}^{f},L_{j}^{g},L_{j}^{h},L_{j}^{k}$ such that
\[
|f_{j}(u)-f_{j}(v)|\leq L_{j}^{f}|u-v|, |g_{j}(u)-g_{j}(v)|\leq L_{j}^{g}|u-v|,\]
\[
|h_{j}(u)-h_{j}(v)|\leq L_{j}^{h}|u-v|, |k_{j}(u)-k_{j}(v)|\leq L_{j}^{k}|u-v|,\]
where $u,v\in \mathbb{R}$ and $f_{j}(0)=g_{j}(0)=h_{j}(0)=k_{j}(0)=0$, $j=1,2,\dots,n$.
\end{itemize}

\begin{theorem}\label{thm31}
Let $(H_1)$-$(H_{3})$ hold. Suppose that
\begin{itemize}
\item[$(H_{4})$] there exists a positive constant $r$ such that
\begin{eqnarray*}
&&\max\limits_{1\leq i\leq n}\bigg\{\frac{\rho_{i}}{c_{i}^{-}},\frac{c_{i}^{+}+c_{i}^{-}}{c_{i}^{-}}\rho_{i}\bigg\}
+\max\big\{K_{1},K_{2}\big\}\leq r,\\
&&0<\frac{\rho_{i}}{r}<\frac{c_{i}^{-}}{c_{i}^{+}+c_{i}^{-}}<c_{i}^{-}
,\,\,\,i=1,2,\ldots,n,
\end{eqnarray*}
where
\begin{eqnarray*}
\rho_{i}&=&\big(c_{i}^{+}\delta_{i}^{+}+\sum_{j=1}^{n}a_{ij}^{+}L_{j}^{f}
+\sum_{j=1}^{n}b_{ij}^{+}L_{j}^{g}+\sum_{j=1}^{n}d_{ij}^{+}\sigma_{ij}^{+}L_{j}^{h}\\
&&+\sum_{j=1}^{n}\sum_{l=1}^{n}T_{ijl}^{+}L_{j}^{k}L_{l}^{k}\big)r,\,\,\,\,i=1,2,\ldots,n,\\
K_{1}&=&\max\limits_{1\leq i \leq
n}\bigg\{\frac{I_{i}^{+}}{c_{i}^{-}}\bigg\},\,\,\,\,K_{2}=\max\limits_{1\leq i\leq n}\bigg\{\frac{c_{i}^{+}+c_{i}^{-}}{c_{i}^{-}}I_{i}^{+}\bigg\}.
\end{eqnarray*}
\end{itemize}
Then system \eqref{a1} has at least one pseudo almost periodic solution in the region
$\mathbb{E}=\{\varphi\in\mathbb{B}: \|\varphi\|_{\mathbb{B}}\leq r\}$.
\end{theorem}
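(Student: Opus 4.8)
The plan is to recast \eqref{a1} as a fixed-point equation for an integral operator and then apply Banach's contraction principle on the closed ball $\mathbb{E}$. The first move is to absorb the leakage term into the linear part by writing
\[
-c_i(t)x_i(t-\delta_i(t))=-c_i(t)x_i(t)+c_i(t)\int_{t-\delta_i(t)}^{t}x_i^{\nabla}(s)\nabla s,
\]
so that each equation reads $x_i^{\nabla}(t)=-c_i(t)x_i(t)+G_i(\varphi)(t)$, where $G_i(\varphi)$ collects the corrected leakage integral, the two finite-delay sums, the distributed-delay term $\sum_j d_{ij}\int h_j(x_j^{\nabla})$, the high-order quadratic sum, and the input $I_i$. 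By $(H_1)$ and Lemma \ref{lem28}, the diagonal system $x^{\nabla}=\mathrm{diag}(-c_1,\ldots,-c_n)x$ admits an exponential dichotomy with projection $P=I$, so by Lemma \ref{lem27} every bounded pseudo almost periodic solution must satisfy
\[
x_i(t)=\int_{-\infty}^{t}\hat{e}_{-c_i}(t,\rho(s))\,G_i(\varphi)(s)\,\nabla s=:(\Phi\varphi)_i(t).
\]

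Before estimating I would confirm that $\Phi$ maps $\mathbb{B}$ into $\mathbb{B}$. Under $(H_2)$, $(H_3)$, the product Lemma \ref{lem25}, and the composition result for pseudo almost periodic functions (together with the analogous statement for the distributed-delay integral), each $G_i(\varphi)$ lies in $PAP(\mathbb{T})$, and then Lemma \ref{lem27} yields $(\Phi\varphi)_i\in PAP(\mathbb{T})$. Since the norm on $\mathbb{B}$ controls both a function and its nabla-derivative, I must also handle $(\Phi\varphi)_i^{\nabla}$; here Lemma \ref{lem24} provides the clean identity
\[
(\Phi\varphi)_i^{\nabla}(t)=G_i(\varphi)(t)-c_i(t)\int_{-\infty}^{t}\hat{e}_{-c_i}(t,\rho(s))G_i(\varphi)(s)\nabla s=G_i(\varphi)(t)-c_i(t)(\Phi\varphi)_i(t),
\]
which shows $(\Phi\varphi)_i^{\nabla}\in PAP(\mathbb{T})$ as well, so indeed $\Phi(\mathbb{B})\subseteq\mathbb{B}$.

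For the self-mapping property on $\mathbb{E}$ I would use the dichotomy estimate $\int_{-\infty}^{t}\hat{e}_{-c_i}(t,\rho(s))\nabla s\le 1/c_i^{-}$, together with $f_j(0)=g_j(0)=h_j(0)=k_j(0)=0$ and the Lipschitz bounds in $(H_3)$, to get, for $\|\varphi\|_{\mathbb{B}}\le r$, the pointwise bound $|G_i(\varphi)(t)|\le\rho_i+I_i^{+}$ with $\rho_i$ exactly the quantity in $(H_4)$. This gives
\[
\sup_t|(\Phi\varphi)_i(t)|\le\frac{\rho_i+I_i^{+}}{c_i^{-}},\qquad \sup_t|(\Phi\varphi)_i^{\nabla}(t)|\le\frac{c_i^{+}+c_i^{-}}{c_i^{-}}(\rho_i+I_i^{+}),
\]
where the second follows from the derivative identity above and $|G_i|+c_i^{+}|(\Phi\varphi)_i|$. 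Taking the maximum over $i$ and separating the $\rho_i$-contributions from the input contributions $K_1,K_2$, the first inequality of $(H_4)$ yields $\|\Phi\varphi\|_{\mathbb{B}}\le r$, i.e. $\Phi(\mathbb{E})\subseteq\mathbb{E}$.

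Finally, for the contraction I would take $\varphi,\psi\in\mathbb{E}$ and estimate $G_i(\varphi)-G_i(\psi)$ term by term through the Lipschitz constants, handling the quadratic high-order term by the splitting $k_j(u)k_l(v)-k_j(\bar u)k_l(\bar v)=k_j(u)[k_l(v)-k_l(\bar v)]+k_l(\bar v)[k_j(u)-k_j(\bar u)]$ together with the a priori bounds $|k_j(\cdot)|\le L_j^{k}r$ on $\mathbb{E}$. Using the same dichotomy estimate and Lemma \ref{lem24} for the derivative component, I expect a bound of the form
\[
\|\Phi\varphi-\Phi\psi\|_{\mathbb{B}}\le\max_{1\le i\le n}\Bigl\{\tfrac{\rho_i}{c_i^{-}r},\ \tfrac{c_i^{+}+c_i^{-}}{c_i^{-}}\tfrac{\rho_i}{r}\Bigr\}\,\|\varphi-\psi\|_{\mathbb{B}},
\]
and the second line of $(H_4)$, namely $\rho_i/r<c_i^{-}/(c_i^{+}+c_i^{-})<c_i^{-}$, forces this factor below $1$; Banach's theorem then delivers the unique fixed point in $\mathbb{E}$. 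The main obstacle throughout is the neutral structure: because the right-hand side contains $x_j^{\nabla}$ through the distributed-delay integral and because the leakage reformulation introduces $\int x_i^{\nabla}$, every estimate must be carried out simultaneously for the function and its nabla-derivative in the product norm of $\mathbb{B}$, and it is precisely Lemma \ref{lem24} that makes the derivative side of both the self-mapping and the contraction estimates tractable.
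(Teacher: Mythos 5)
Your proposal follows essentially the same route as the paper: the same absorption of the leakage term into the linear part, the same use of Lemma \ref{lem28} for the exponential dichotomy and Lemma \ref{lem27} for the integral representation of $\Phi$, the same self-mapping bounds $\rho_i/c_i^{-}+I_i^{+}/c_i^{-}$ and $\frac{c_i^{+}+c_i^{-}}{c_i^{-}}(\rho_i+I_i^{+})$ via Lemma \ref{lem24}, and the same contraction factor controlled by the second line of $(H_4)$. The argument is correct and matches the paper's proof.
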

\begin{proof}
Rewrite \eqref{a1} in the form
\begin{eqnarray*}
x_{i}^{\nabla}(t)&=&-c_{i}(t)x_{i}(t)+c_{i}(t)\int_{t-\delta_{i}(t)}^{t}x_{i}^{\nabla}
(s) \nabla s+\sum_{j=1}^{n}a_{ij}(t)f_{j}(x_{j}(t))\\
&&+\sum_{j=1}^{n}b_{ij}(t)g_{j}(x_{j}(t-\tau_{ij}(t)))
+\sum_{j=1}^{n}d_{ij}(t)\int_{t-\sigma_{ij}(t)}^{t}h_{j}(x_{j}^{\nabla}(s))\nabla s\\
&&+\sum_{j=1}^{n}\sum_{l=1}^{n}T_{ijl}(t)k_{j}(x_{j}(t-\xi_{ijl}(t)))k_{l}(x_{l}(t-\zeta_{ijl}(t)))+I_{i}(t),\,\,\,t\in \mathbb{T},\,\,\, i=1,2,\ldots,n.
\end{eqnarray*}

For any $\varphi\in \mathbb{B}$, we consider the following system
\begin{eqnarray}\label{b1}
  x_{i}^{\nabla}(t)=-c_{i}(t)x_{i}(t)+F_{i}(t,\varphi)+I_{i}(t),\,\,\, t\in \mathbb{T},\quad i=1,2,\ldots,n,
\end{eqnarray}
where
\begin{eqnarray*}
F_{i}(t,\varphi)&=&c_{i}(t)\int_{t-\delta_{i}(t)}^{t}\varphi_{i}^{\nabla}
(s) \nabla s+\sum_{j=1}^{n}a_{ij}(t)f_{j}(\varphi_{j}(t))\\
&&+\sum_{j=1}^{n}b_{ij}(t)g_{j}(\varphi_{j}(t-\tau_{ij}(t)))
+\sum_{j=1}^{n}d_{ij}(t)\int_{t-\sigma_{ij}(t)}^{t}h_{j}(\varphi_{j}^{\nabla}(s))\nabla s\\
&&+\sum_{j=1}^{n}\sum_{l=1}^{n}T_{ijl}(t)k_{j}(\varphi_{j}(t-\xi_{ijl}(t)))k_{l}(\varphi_{l}(t-\zeta_{ijl}(t))).
\end{eqnarray*}
Since $\min\limits_{1\leq i\leq n}\big\{\inf\limits_{t\in\mathbb{T}}c_{i}(t)\big\}>0$, it follows from Lemma \ref{lem28} that the linear system
\begin{eqnarray}\label{b2}
  x_{i}^{\nabla}(t)=-c_{i}(t)x_{i}(t),\,\, i=1,2,\ldots,n
\end{eqnarray}
admits an exponential dichotomy on $\mathbb{T}$. Thus, by Lemma \ref{lem27}, we know that system \eqref{b1} has exactly one pseudo almost periodic solution which can be expressed as follows:
\[
x_\varphi=(x_{\varphi_1},x_{\varphi_2},\ldots,x_{\varphi_n})^{T},
\]
where
\begin{eqnarray*}
  x_{\varphi_{i}}(t)=\int_{-\infty}^{t}\hat{e}_{-c_{i}}(t,\rho(s))
  \big(F_{i}(s,\varphi)+I_{i}(s)\big)\nabla s,\,\, i=1,2,\ldots,n.
\end{eqnarray*}
 Define the following operator
\begin{eqnarray*}
\Phi:\mathbb{E} &\rightarrow &\mathbb{E}\\
(\varphi_{1},\varphi_{2},\ldots,\varphi_{n})^{T}&\rightarrow & (x_{\varphi_1},x_{\varphi_2},\ldots,x_{\varphi_n})^{T}.
\end{eqnarray*}
We will show that $\Phi$ is a contraction.

First, we show that for any $\varphi\in \mathbb{E}$, we have $\Phi\varphi\in \mathbb{E}$. Note that
\begin{eqnarray*}
|F_{i}(s,\varphi)|&=&\bigg|c_{i}(s)\int_{s-\delta_{i}(s)}^{s}\varphi_{i}^{\nabla}
(u) \nabla u+\sum_{j=1}^{n}a_{ij}(s)f_{j}(\varphi_{j}(s))\\
&&+\sum_{j=1}^{n}b_{ij}(s)g_{j}(\varphi_{j}(s-\tau_{ij}(s)))
+\sum_{j=1}^{n}d_{ij}(s)\int_{s-\sigma_{ij}(s)}^{s}h_{j}(\varphi_{j}^{\nabla}(u))\nabla u\\
&&+\sum_{j=1}^{n}\sum_{l=1}^{n}T_{ijl}(s)k_{j}(\varphi_{j}(s-\xi_{ijl}(s)))k_{l}(\varphi_{l}(s-\zeta_{ijl}(s)))\bigg|\\
  &\leq&c_{i}^{+}\bigg|\int_{s-\delta_{i}(s)}^{s}\varphi_{i}^{\nabla}(u) \nabla u\bigg|+\sum_{j=1}^{n}a_{ij}^{+}\big|f_{j}(\varphi_{j}(s))\big|\\
&&+\sum_{j=1}^{n}b_{ij}^{+}\big|g_{j}(\varphi_{j}(s-\tau_{ij}(s)))\big|
+\sum_{j=1}^{n}d_{ij}^{+}\bigg|\int_{s-\sigma_{ij}(s)}^{s}h_{j}(\varphi_{j}^{\nabla}(u))\nabla u\bigg|\\
&&+\sum_{j=1}^{n}\sum_{l=1}^{n}T_{ijl}^{+}\big|k_{j}(\varphi_{j}(s-\xi_{ijl}(s)))
k_{l}(\varphi_{l}(s-\zeta_{ijl}(s)))\big|\\
&\leq&c_{i}^{+}\delta_{i}^{+}|\varphi_{i}^{\nabla}(s)|+\sum_{j=1}^{n}a_{ij}^{+}L_{j}^{f}\big|\varphi_{j}(s)\big|
+\sum_{j=1}^{n}b_{ij}^{+}L_{j}^{g}\big|\varphi_{j}(s-\tau_{ij}(s))\big|\\
&&+\sum_{j=1}^{n}d_{ij}^{+}\sigma_{ij}^{+}L_{j}^{h}\big|\varphi_{j}^{\nabla}(s)\big|
+\sum_{j=1}^{n}\sum_{l=1}^{n}T_{ijl}^{+}L_{j}^{k}L_{l}^{k}\big|\varphi_{j}(s-\xi_{ijl}(s))\big|\big|\varphi_{l}(s-\zeta_{ijl}(s))\big|\\
&\leq&\bigg(c_{i}^{+}\delta_{i}^{+}+\sum_{j=1}^{n}a_{ij}^{+}L_{j}^{f}
+\sum_{j=1}^{n}b_{ij}^{+}L_{j}^{g}+\sum_{j=1}^{n}d_{ij}^{+}\sigma_{ij}^{+}L_{j}^{h}
+\sum_{j=1}^{n}\sum_{l=1}^{n}T_{ijl}^{+}L_{j}^{k}L_{l}^{k}\bigg)\|\varphi\|_{\mathbb{B}}\\
  &\leq&\bigg(c_{i}^{+}\delta_{i}^{+}+\sum_{j=1}^{n}a_{ij}^{+}L_{j}^{f}
+\sum_{j=1}^{n}b_{ij}^{+}L_{j}^{g}+\sum_{j=1}^{n}d_{ij}^{+}\sigma_{ij}^{+}L_{j}^{h}
+\sum_{j=1}^{n}\sum_{l=1}^{n}T_{ijl}^{+}L_{j}^{k}L_{l}^{k}\bigg)r\\
&=&\rho_{i},\,\,i=1,2,\ldots,n.
\end{eqnarray*}
Therefore, we can get
\begin{eqnarray*}
\sup\limits_{t\in\mathbb{T}}|x_{\varphi_{i}}(t)|&=&\sup\limits_{t\in\mathbb{T}}\bigg|\int_{-\infty}^{t}\hat{e}_{-c_{i}}(t,\rho(s))
  \big(F_{i}(s,\varphi)+I_{i}(s)\big)\nabla s\bigg|\\
  &\leq&\sup\limits_{t\in\mathbb{T}}\int_{-\infty}^{t}\hat{e}_{-c_{i}^{-}}(t,\rho(s))
  \big|F_{i}(s,\varphi)\big|\nabla s+\frac{I_{i}^{+}}{c_{i}^{-}}\\
  &\leq&\frac{\rho_{i}}{c_{i}^{-}}
  +\frac{I_{i}^{+}}{c_{i}^{-}}\leq r,\,\,\,\,i=1,2,\ldots,n.
\end{eqnarray*}
On the other hand, for $i=1,2,\ldots,n$, we have
\begin{eqnarray*}
  \sup\limits_{t\in\mathbb{T}}|x_{\varphi_{i}}^{\nabla}(t)|&=&\sup\limits_{t\in\mathbb{T}}\bigg|
  F_{i}(t,\varphi)+I_{i}(t)-c_{i}(t)\int_{-\infty}^{t}\hat{e}_{-c_{i}}(t,\rho(s))
  \big(F_{i}(s,\varphi)+I_{i}(s)\big)\nabla s \bigg|\\
 &\leq&\bigg(c_{i}^{+}\delta_{i}^{+}+\sum_{j=1}^{n}a_{ij}^{+}L_{j}^{f}
+\sum_{j=1}^{n}b_{ij}^{+}L_{j}^{g}+\sum_{j=1}^{n}d_{ij}^{+}\sigma_{ij}^{+}L_{j}^{h}
+\sum_{j=1}^{n}\sum_{l=1}^{n}T_{ijl}^{+}L_{j}^{k}L_{l}^{k}\bigg)r+I_{i}^{+}\\
 &&+c_{i}^{+}\int_{-\infty}^{t}\hat{e}_{-c_{i}^{-}}(t,\rho(s))
  \bigg(c_{i}^{+}\delta_{i}^{+}+\sum_{j=1}^{n}a_{ij}^{+}L_{j}^{f}
+\sum_{j=1}^{n}b_{ij}^{+}L_{j}^{g}+\sum_{j=1}^{n}d_{ij}^{+}\sigma_{ij}^{+}L_{j}^{h}\\
&&+\sum_{j=1}^{n}\sum_{l=1}^{n}T_{ijl}^{+}L_{j}^{k}L_{l}^{k}\bigg)r\nabla s+\frac{I_{i}^{+}}{c_{i}^{-}}\\
  &\leq&\frac{c_{i}^{+}+c_{i}^{-}}{c_{i}^{-}}\rho_{i}
  +\frac{c_{i}^{+}+c_{i}^{-}}{c_{i}^{-}}I_{i}^{+}
  \leq r.
\end{eqnarray*}
In view of $(H_{4})$, we have
\begin{eqnarray*}
\|\Phi(\varphi)\|_{\mathbb{B}}=\max\limits_{1\leq i\leq n}\bigg\{\sup\limits_{t\in\mathbb{T}}|x_{\varphi_{i}}(t)|, \sup\limits_{t\in\mathbb{T}}|x_{\varphi_{i}}^{\nabla}(t)|\bigg\}\leq r,
\end{eqnarray*}
which implies that $\Phi\varphi \in \mathbb{E}$. Hence, the mapping $\Phi$ is
a self-mapping from $\mathbb{E}$ to $\mathbb{E}$. Next, we shall prove that $\Phi$ is a contraction mapping.
For any $\varphi,\psi\in \mathbb{E}$, we denote
\begin{eqnarray*}
H_{i}(s,\varphi,\psi)
&=&c_{i}(s)\int_{s-\delta_{i}(s)}^{s}[\varphi_{i}^{\nabla}(u)-\psi_{i}^{\nabla}(u) ] \nabla u
+\sum_{j=1}^{n}a_{ij}(s)\big[f_{j}(\varphi_{j}(s))-f_{j}(\psi_{j}(s))\big]\\
&&+\sum_{j=1}^{n}b_{ij}(s)\big[g_{j}(\varphi_{j}(s-\tau_{ij}(s)))-g_{j}(\psi_{j}(s-\tau_{ij}(s)))\big]\\
&&+\sum_{j=1}^{n}d_{ij}(s)\int_{s-\sigma_{ij}(s)}^{s}\big[h_{j}(\varphi_{j}^{\nabla}(u))-h_{j}(\psi_{j}^{\nabla}(u))\big]\nabla u\\
&&+\sum_{j=1}^{n}\sum_{l=1}^{n}T_{ijl}(s)\big[k_{j}(\varphi_{j}(s-\xi_{ijl}(s)))k_{l}(\varphi_{l}(s-\zeta_{ijl}(s)))\\
&&-k_{j}(\psi_{j}(s-\xi_{ijl}(s)))k_{l}(\psi_{l}(s-\zeta_{ijl}(s)))\big], i=1,2,\ldots,n.
\end{eqnarray*}
Then, for $i=1,2,\ldots,n$, we have
\begin{eqnarray*}
\sup\limits_{t\in\mathbb{T}}\big|x_{\varphi_{i}}(t)-x_{\psi_{i}}(t)\big|&=&
\sup\limits_{t\in\mathbb{T}}\bigg|\int_{-\infty}^{t}\hat{e}_{-c_{i}}(t,\rho(s))
  H_{i}(s,\varphi,\psi)\nabla s\bigg|\\
  &\leq&\sup\limits_{t\in\mathbb{T}}\int_{-\infty}^{t}\hat{e}_{-c_{i}^{-}}(t,\rho(s))
  \bigg(c_{i}^{+}\delta_{i}^{+}+\sum_{j=1}^{n}a_{ij}^{+}L_{j}^{f}+\sum_{j=1}^{n}b_{ij}^{+}L_{j}^{g}\\
&&+\sum_{j=1}^{n}d_{ij}^{+}\sigma_{ij}^{+}L_{j}^{h}
+\sum_{j=1}^{n}\sum_{l=1}^{n}T_{ijl}^{+}L_{j}^{k}L_{l}^{k}\bigg)\nabla s\|\varphi-\psi\|_{\mathbb{B}}\\
  &\leq&\frac{1}{c_{i}^{-}}\big(c_{i}^{+}\delta_{i}^{+}+\sum_{j=1}^{n}a_{ij}^{+}L_{j}^{f}
+\sum_{j=1}^{n}b_{ij}^{+}L_{j}^{g}+\sum_{j=1}^{n}d_{ij}^{+}\sigma_{ij}^{+}L_{j}^{h}\\
&&+\sum_{j=1}^{n}\sum_{l=1}^{n}T_{ijl}^{+}L_{j}^{k}L_{l}^{k}\big)\|\varphi-\psi\|_{\mathbb{B}},\\
\sup\limits_{t\in\mathbb{T}}\big|(x_{\varphi_{i}}(t)-x_{\psi_{i}}(t))^{\nabla}\big|
&=&\sup\limits_{t\in\mathbb{T}}\bigg|\bigg(\int_{-\infty}^{t}\hat{e}_{-c_{i}}(t,\rho(s))
  H_{i}(s,\varphi,\psi)\nabla s\bigg)^{\nabla}\bigg|\\
&=&\sup\limits_{t\in\mathbb{T}}\bigg|H_{i}(t,\varphi,\psi)-c_{i}(t)\int_{-\infty}^{t}\hat{e}_{-c_{i}}(t,\rho(s))
  H_{i}(s,\varphi,\psi)\nabla s\bigg|\\
&\leq&|H_{i}(t,\varphi,\psi)|+c_{i}^{+}\sup\limits_{t\in\mathbb{T}}\bigg|\int_{-\infty}^{t}\hat{e}_{-c_{i}^{-}}(t,\rho(s))
  H_{i}(s,\varphi,\psi)\nabla s\bigg|\\
  &\leq&\bigg(c_{i}^{+}\delta_{i}^{+}+\sum_{j=1}^{n}a_{ij}^{+}L_{j}^{f}
+\sum_{j=1}^{n}b_{ij}^{+}L_{j}^{g}+\sum_{j=1}^{n}d_{ij}^{+}\sigma_{ij}^{+}L_{j}^{h}\\
&&+\sum_{j=1}^{n}\sum_{l=1}^{n}T_{ijl}^{+}L_{j}^{k}L_{l}^{k}\bigg)\|\varphi-\psi\|_{\mathbb{B}}\\
 &&+\frac{c_{i}^{+}}{c_{i}^{-}}\bigg(c_{i}^{+}\delta_{i}^{+}+\sum_{j=1}^{n}a_{ij}^{+}L_{j}^{f}
+\sum_{j=1}^{n}b_{ij}^{+}L_{j}^{g}+\sum_{j=1}^{n}d_{ij}^{+}\sigma_{ij}^{+}L_{j}^{h}\\
&&+\sum_{j=1}^{n}\sum_{l=1}^{n}T_{ijl}^{+}L_{j}^{k}L_{l}^{k}\bigg)\|\varphi-\psi\|_{\mathbb{B}}\\
 &=&\frac{c_{i}^{+}+c_{i}^{-}}{c_{i}^{-}}\bigg(c_{i}^{+}\delta_{i}^{+}+\sum_{j=1}^{n}a_{ij}^{+}L_{j}^{f}
+\sum_{j=1}^{n}b_{ij}^{+}L_{j}^{g}+\sum_{j=1}^{n}d_{ij}^{+}\sigma_{ij}^{+}L_{j}^{h}\\
&&+\sum_{j=1}^{n}\sum_{l=1}^{n}T_{ijl}^{+}L_{j}^{k}L_{l}^{k}\bigg)\|\varphi-\psi\|_{\mathbb{B}}.
\end{eqnarray*}
By $(H_{4})$, we have
\begin{eqnarray*}
\|\Phi(\varphi)-\Phi(\psi)\|_{\mathbb{B}}<\|\varphi-\psi\|_{\mathbb{B}}.
\end{eqnarray*}
Hence, we obtain that $\Phi$ is a contraction mapping. By the fixed point theorem of Banach space \cite{41}, it follows that $\Phi$
has a fixed point in $\mathbb{E}$; that is, system \eqref{a1} has a unique pseudo almost periodic solution. This completes the proof of Theorem \ref{thm31}.
\end{proof}

\section{Global exponential stability of pseudo almost periodic solution}

\setcounter{equation}{0}

\indent

In this section, we will study the exponential stability of pseudo almost periodic solutions of \eqref{a1}.

\begin{definition}
The pseudo almost periodic solution $x^{\ast}(t)=(x_{1}^{\ast}(t),x_{2}^{\ast}(t),\ldots,x_{n}^{\ast}(t))^{T}$ of
system \eqref{a1} with initial value $\varphi^{\ast}(t)=(\varphi_{1}^{\ast}(t),\varphi_{2}^{\ast}(t),\ldots,\varphi_{n}^{\ast}(t))^{T}$ is said to be globally exponentially stable if there exist a positive constant $\lambda$ with $\ominus_{\nu}\lambda\in \mathcal{R}^+$ and $M>1$ such that every solution $x(t)=(x_{1}(t),x_{2}(t),\ldots,x_{n}(t))^{T}$ of system \eqref{a1} with initial value
$\varphi(t)=(\varphi_{1}(t),\varphi_{2}(t),\ldots,\varphi_{n}(t))^{T}$
satisfies
\[\|x(t)-x^{\ast}(t)\|\leq Me_{\ominus_{\nu}\lambda}(t,t_0)\|\psi\|,\,\,\,\,\forall t\in(0,+\infty)_{\mathbb{T}},\]
where $\|\psi\|=\sup\limits_{t\in[-\theta,0]_{\mathbb{T}}}\max\limits_{1\leq i\leq n}|\varphi_{i}(t)-\varphi_{i}^{\ast}(t)|$, $t_{0}=\max\{[-\theta,0]_{\mathbb{T}}\}$.
\end{definition}

\begin{theorem}\label{thm41}
Assume that  $(H_1)$-$(H_4)$ hold, then system \eqref{a1} has a unique
almost periodic solution that is globally exponentially stable.
\end{theorem}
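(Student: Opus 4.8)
The plan is to use the pseudo almost periodic solution $x^*$ produced by Theorem \ref{thm31} as the reference trajectory and show that every other solution converges to it exponentially, so that uniqueness follows as a byproduct. Let $x$ be an arbitrary solution of \eqref{a1} with initial value $\varphi$, and set $y_i(t)=x_i(t)-x_i^*(t)$. Subtracting the two copies of \eqref{a1} and performing the same leakage-term rewriting used in the proof of Theorem \ref{thm31}, I would obtain
\[
y_i^\nabla(t)=-c_i(t)y_i(t)+c_i(t)\int_{t-\delta_i(t)}^t y_i^\nabla(s)\nabla s+G_i(t),
\]
where $G_i(t)$ collects the differences of the activation terms. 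By $(H_3)$ each piece of $G_i$ is Lipschitz-controlled, e.g. $|f_j(x_j(t))-f_j(x_j^*(t))|\le L_j^f|y_j(t)|$; the second-order product is handled by adding and subtracting a mixed term, and since all solutions lie in the bounded set $\mathbb{E}$ the factors $k_l$ are uniformly bounded, so that term too is dominated by $T_{ijl}^+L_j^kL_l^k$ times the relevant $|y|$'s.

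The key step is to manufacture the decay rate. I would introduce, for each $i$, the function
\[
\Pi_i(\omega)=c_i^- - \omega - e^{\omega\bar\nu}\Big(c_i^+\delta_i^+ + \sum_{j=1}^n a_{ij}^+L_j^f + \sum_{j=1}^n b_{ij}^+L_j^g + \sum_{j=1}^n d_{ij}^+\sigma_{ij}^+L_j^h + \sum_{j=1}^n\sum_{l=1}^n T_{ijl}^+L_j^kL_l^k\Big),
\]
where $\bar\nu=\sup_{t\in\mathbb{T}}\nu(t)$, together with the natural analogue $\Pi_i^*(\omega)$ governing the derivative estimate. By $(H_4)$ we have $\Pi_i(0)>0$ and $\Pi_i^*(0)>0$; since both are continuous in $\omega$, I can fix a single $\lambda>0$ with $\ominus_\nu\lambda\in\mathcal{R}^+$ small enough that $\Pi_i(\lambda)>0$ and $\Pi_i^*(\lambda)>0$ for all $i$. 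Then I would take $M>1$ large enough that the target bound holds trivially on $[-\theta,0]_{\mathbb{T}}$, using $\|\psi\|$ and the positive lower bound of $\hat{e}_{\ominus_\nu\lambda}(t,t_0)$ there.

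The core is a contradiction estimate carried out on the integral representation
\[
y_i(t)=y_i(t_0)\hat{e}_{-c_i}(t,t_0)+\int_{t_0}^t\hat{e}_{-c_i}(t,\rho(s))\Big[c_i(s)\int_{s-\delta_i(s)}^s y_i^\nabla(u)\nabla u+G_i(s)\Big]\nabla s,
\]
together with its nabla derivative obtained through Lemma \ref{lem24}. Assuming the exponential bound fails, I would let $t_1$ be the first time at which $|y_i(t)|\hat{e}_\lambda(t,t_0)$ or $|y_i^\nabla(t)|\hat{e}_\lambda(t,t_0)$ attains the value $M\|\psi\|$. Substituting the induction hypothesis $\|y(s)\|\le M\hat{e}_{\ominus_\nu\lambda}(s,t_0)\|\psi\|$ for $s<t_1$ into the two integral estimates and combining the exponentials via the semigroup-type identities for $\hat{e}$, the positivity of $\Pi_i(\lambda)$ and $\Pi_i^*(\lambda)$ forces the value at $t_1$ to be \emph{strictly} below $M\|\psi\|$, a contradiction. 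Uniqueness of the pseudo almost periodic solution is then immediate from this stability estimate.

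The hardest part, and the reason the neutral/leakage structure matters, is controlling the derivative $y_i^\nabla$ simultaneously with $y_i$: the neutral term $\int_{s-\sigma_{ij}(s)}^s h_j(x_j^\nabla)\nabla s$ and the leakage correction $c_i(s)\int_{s-\delta_i(s)}^s y_i^\nabla\,\nabla s$ both feed the state equation with the derivative of the unknown, so the two exponential inequalities are genuinely coupled. This is precisely why $\lambda$ must be chosen to satisfy $\Pi_i(\lambda)>0$ and $\Pi_i^*(\lambda)>0$ at once, and why tracking both $|y_i|$ and $|y_i^\nabla|$ in the $\|\cdot\|_{\mathbb{B}}$ norm is essential rather than a mere convenience.
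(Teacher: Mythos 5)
Your proposal follows essentially the same route as the paper: the same leakage-term rewriting and variation-of-constants representation, the same construction of the decay rate $\lambda$ from a continuous function of $\omega$ that is positive at $\omega=0$ by $(H_4)$, and the same first-violation-time contradiction tracking $|y_i|$ and $|y_i^{\nabla}|$ jointly in the $\|\cdot\|_{\mathbb{B}}$ norm. The only cosmetic difference is that your $\Pi_i(\omega)$ omits the delay-dependent factors $e^{\omega\tau_{ij}^{+}}$, $e^{\omega\delta_i^{+}}$, etc.\ that necessarily appear when the exponential weight is shifted across the delayed arguments (the paper's $R_i(\beta)$ carries them), but since both functions are continuous and reduce to the same positive quantity at $0$, the choice of $\lambda$ and the rest of the argument go through identically.
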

\begin{proof}
From Theorem \ref{thm31},  we see that system \eqref{a1} has at least one pseudo almost periodic solution $x^{\ast}(t)=(x_{1}^{\ast}(t),x_{2}^{\ast}(t),\ldots,x_{n}^{\ast}(t))^{T}$ with
initial value $\varphi^{\ast}(t)=(\varphi_{1}^{\ast}(t),\varphi_{2}^{\ast}(t),\ldots,\varphi_{n}^{\ast}(t))^{T}$. Suppose that $x(t)=(x_{1}(t),x_{2}(t),\ldots,x_{n}(t))^{T}$ is an arbitrary solution of \eqref{a1} with initial value
$\varphi(t)=(\varphi_{1}(t),\varphi_{2}(t),\ldots,\varphi_{n}(t))^{T}$.
Then it follows from system \eqref{a1} that
\begin{eqnarray}\label{c1}
z_{i}^{\nabla}(t)&=&-c_{i}(t)z_{i}(t-\delta_{i}(t))+\sum_{j=1}^{n}a_{ij}(t)\big[f_{j}(x_{j}(t))-f_{j}(x^{\ast}_{j}(t))\big]
+\sum_{j=1}^{n}b_{ij}(t)\big[g_{j}(x_{j}(t-\tau_{ij}(t)))\nonumber\\
&&-g_{j}(x^{\ast}_{j}(t-\tau_{ij}(t)))\big]
+\sum_{j=1}^{n}d_{ij}(t)\int_{t-\sigma_{ij}(t)}^{t}\big[h_{j}(x_{j}^{\nabla}(s))-h_{j}((x^{\ast}_{j})^{\nabla}(s))\big]\nabla s\nonumber\\
&&+\sum_{j=1}^{n}\sum_{l=1}^{n}T_{ijl}(t)\big[k_{j}(x_{j}(t-\xi_{ijl}(t)))k_{l}(x_{l}(t-\zeta_{ijl}(t)))\nonumber\\
&&-k_{j}(x^{\ast}_{j}(t-\xi_{ijl}(t)))k_{l}(x^{\ast}_{l}(t-\zeta_{ijl}(t)))\big],
\end{eqnarray}
where $u_{i}(t)=x_{i}(t)-x^{\ast}_{i}(t)$ and $i=1,2,\ldots,n$.

The initial condition of \eqref{c1} is
\begin{eqnarray*}
\psi_{i}(s)=\varphi_{i}(s)-\varphi_{i}^{\ast}(s),\,\,\,\,\, \psi^{\nabla}_{j}(s)=\varphi^{\Delta}_{i}(s)-(\varphi_{i}^{\ast})^{\nabla}(s),
\end{eqnarray*}
where $s\in[-\theta,0]_{\mathbb{T}}$, $i=1,2,\dots,n$.

Rewrite \eqref{c1} in the form
\begin{eqnarray}\label{c2}
z_{i}^{\nabla}(t)+c_{i}(t)z_{i}(t)&=&c_{i}(t)\int_{t-\delta_{i}(t)}^{t}z_{i}^{\nabla}(s)
\nabla s+\sum_{j=1}^{n}a_{ij}(t)\big[f_{j}(x_{j}(t))-f_{j}(x^{\ast}_{j}(t))\big]\nonumber\\
&&+\sum_{j=1}^{n}b_{ij}(t)\big[g_{j}(x_{j}(t-\tau_{ij}(t)))
-g_{j}(x^{\ast}_{j}(t-\tau_{ij}(t)))\big]\nonumber\\
&&+\sum_{j=1}^{n}d_{ij}(t)\int_{t-\sigma_{ij}(t)}^{t}\big[h_{j}(x_{j}^{\nabla}(s))-h_{j}((x^{\ast}_{j})^{\nabla}(s))\big]\nabla s\nonumber\\
&&+\sum_{j=1}^{n}\sum_{l=1}^{n}T_{ijl}(t)\big[k_{j}(x_{j}(t-\xi_{ijl}(t)))k_{l}(x_{l}(t-\zeta_{ijl}(t)))\nonumber\\
&&-k_{j}(x^{\ast}_{j}(t-\xi_{ijl}(t)))k_{l}(x^{\ast}_{l}(t-\zeta_{ijl}(t)))\big],\,\,i=1,2,\dots,n.
\end{eqnarray}
Multiplying the both sides of \eqref{c2} by $\hat{e}_{-c_{i}}(t,\rho(s))$ and
integrating over $[t_{0},t]_{\mathbb{T}}$, where $t_{0}\in[-\theta,0]_{\mathbb{T}}$, by Lemma 2.4, we get
\begin{eqnarray}\label{c5}
 z_{i}(t)&=& z_{i}(t_{0})\hat{e}_{-c_{i}}(t,t_{0})
 +\int_{t_{0}}^{t}\hat{e}_{-c_{i}}(t,\rho(s))\bigg(c_{i}(s)\int_{s-\delta_{i}(s)}^{s}z_{i}^{\nabla}(u)
\nabla u\nonumber\\
&&+\sum_{j=1}^{n}a_{ij}(s)\big[f_{j}(x_{j}(s))-f_{j}(x^{\ast}_{j}(s))\big]
+\sum_{j=1}^{n}b_{ij}(s)\big[g_{j}(x_{j}(s-\tau_{ij}(s)))\nonumber\\
&&-g_{j}(x^{\ast}_{j}(s-\tau_{ij}(s)))\big]
+\sum_{j=1}^{n}d_{ij}(s)\int_{s-\sigma_{ij}(s)}^{s}\big[h_{j}(x_{j}^{\nabla}(u))-h_{j}((x^{\ast}_{j})^{\nabla}(u))\big]\nabla u\nonumber\\
&&+\sum_{j=1}^{n}\sum_{l=1}^{n}T_{ijl}(s)\big[k_{j}(x_{j}(s-\xi_{ijl}(s)))k_{l}(x_{l}(s-\zeta_{ijl}(s)))\nonumber\\
&&-k_{j}(x^{\ast}_{j}(s-\xi_{ijl}(s)))k_{l}(x^{\ast}_{l}(s-\zeta_{ijl}(s)))\big]\bigg)\nabla s,\,\,\,i=1,2,\ldots,n.
\end{eqnarray}

Let $R_{i}$ be defined as follows:
\begin{eqnarray*}
R_{i}(\beta)&=&c_{i}^{-}-\beta-\big(c_{i}^{+}\exp(\beta\sup\limits_{s\in
\mathbb{T}}\nu(s))+c_{i}^{-}-\beta\big)
\bigg(c_{i}^{+}\delta_{i}^{+}\exp(\beta\delta_{i}^{+})\\
&&+\sum_{j=1}^{n}a_{ij}^{+}L_{j}^{f}
+\sum_{j=1}^{n}b_{ij}^{+}L_{j}^{g}\exp(\beta\tau_{ij}^{+})
+\sum_{j=1}^{n}d_{ij}^{+}L_{j}^{h}\sigma_{ij}^{+}\exp(\beta\sigma_{ij}^{+})\\
&&+\sum_{j=1}^{n}\sum_{l=1}^{n}T_{ijl}^{+}L_{j}^{k}L_{l}^{k}\big(\exp(\beta\xi_{ijl}^{+})
+\exp(\beta\zeta_{ijl}^{+})\big)\bigg),\,\,i=1,2,\ldots,n.
\end{eqnarray*}
By $(H_{4})$, we get
\begin{eqnarray*}
R_{i}(0)&=&c_{i}^{-}-\bigg(c_{i}^{+}\delta_{i}^{+}+\sum_{j=1}^{n}a_{ij}^{+}L_{j}^{f}
+\sum_{j=1}^{n}b_{ij}^{+}L_{j}^{g}
+\sum_{j=1}^{n}d_{ij}^{+}L_{j}^{h}\\
&&+\sum_{j=1}^{n}\sum_{l=1}^{n}T_{ijl}^{+}L_{j}^{k}L_{l}^{k}\bigg)> 0,
\,\,i=1,2,\ldots,n.
\end{eqnarray*}
Since $R_{i}$ is continuous on $[0,+\infty)$ and
$R_{i}(\beta) \rightarrow -\infty$, as
$\beta\rightarrow +\infty$,  there exists $\gamma_{i} > 0$  such that
$R_{i}(\gamma_{i})=0$ and $R_{i}(\beta)> 0$
for $\beta\in(0,\gamma_{i})$. Take
$a=\min\limits_{1\leq i\leq n}\big\{\gamma_{i}\big\}$,
we have $R_{i}(a)\geq 0$. So, we can
choose a positive constant $0< \lambda <
\min\big\{a,\min\limits_{1\leq i \leq n}\{c_{i}^{-}\}\big\}$ such
that
\[
R_{i}(\lambda)>0, \quad i=1,2,\ldots,n,
\]
which implies that
\begin{eqnarray}\label{e42}
&&\bigg(1+\frac{c_{i}^{+}\exp(\lambda\sup\limits_{s\in
\mathbb{T}}\nu(s))}{c_{i}^{-}-\lambda}\bigg)
\bigg(c_{i}^{-}-\bigg(c_{i}^{+}\delta_{i}^{+}+\sum_{j=1}^{n}a_{ij}^{+}L_{j}^{f}
+\sum_{j=1}^{n}b_{ij}^{+}L_{j}^{g}
+\sum_{j=1}^{n}d_{ij}^{+}L_{j}^{h}\nonumber\\
&&+\sum_{j=1}^{n}\sum_{l=1}^{n}T_{ijl}^{+}L_{j}^{k}L_{l}^{k}\bigg)\bigg)< 1,\,\,i=1,2,\ldots,n.
\end{eqnarray}

Let
\begin{eqnarray*}
M=\max\limits_{1\leq i \leq n}\bigg\{\frac{c_{i}^{-}}{c_{i}^{-}-\bigg(c_{i}^{+}\delta_{i}^{+}+\sum_{j=1}^{n}a_{ij}^{+}L_{j}^{f}
+\sum_{j=1}^{n}b_{ij}^{+}L_{j}^{g}
+\sum_{j=1}^{n}d_{ij}^{+}L_{j}^{h}
+\sum_{j=1}^{n}\sum_{l=1}^{n}T_{ijl}^{+}L_{j}^{k}L_{l}^{k}\bigg)}\bigg\},
\end{eqnarray*}
then by $(H_{4})$ we have $M>1$.

Hence, it is obvious that
\begin{eqnarray}
\|z(t)\|_{\mathbb{B}}\leq M\hat{e}_{\ominus_{\nu}\lambda}(t,t_{0})\|\psi\|_{\mathbb{B}}, \quad \forall
t\in[-\theta,t_{0}]_{\mathbb{T}},
\end{eqnarray}
where $\ominus_{\nu}\lambda\in\mathcal{R}^{+}$. We claim that
\begin{eqnarray}\label{e45}
\|z(t)\|_{\mathbb{B}}\leq M\hat{e}_{\ominus_{\nu}\lambda}(t,t_{0})\|\psi\|_{\mathbb{B}}, \quad \forall
t\in(t_{0},+\infty)_{\mathbb{T}}.
\end{eqnarray}
To prove \eqref{e45}, we show that for any $P>1$, the following inequality holds:
\begin{eqnarray}\label{e46}
\|z(t)\|_{\mathbb{B}}< PM\hat{e}_{\ominus_{\nu}\lambda}(t,t_{0})\|\psi\|_{\mathbb{B}}, \quad \forall
t\in(t_{0},+\infty)_{\mathbb{T}},
\end{eqnarray}
which implies that, for $i=1,2,\ldots,n$, we have
\begin{eqnarray}\label{e47}
|z_{i}(t)|< PM\hat{e}_{\ominus_{\nu}\lambda}(t,t_{0})\|\psi\|_{\mathbb{B}}, \quad \forall
t\in(t_{0},+\infty)_{\mathbb{T}}
\end{eqnarray}
and
\begin{eqnarray}\label{e48}
|z^{\nabla}_{i}(t)|< PM\hat{e}_{\ominus_{\nu}\lambda}(t,t_{0})\|\psi\|_{\mathbb{B}}, \quad \forall
t\in(t_{0},+\infty)_{\mathbb{T}}.
\end{eqnarray}
If \eqref{e46} is not true, then there must be some $t_{1}\in
(t_{0},+\infty)_{\mathbb{T}}$ and some $i_1, i_2\in \{1,2,\ldots,n\}$
such that
\begin{eqnarray*}
\|z(t_1)\|_{\mathbb{B}}=\max\{|z_{i_{1}}(t_{1})|,|z_{i_{2}}^{\nabla}(t_1)|\}
\geq P M\hat{e}_{\ominus_{\nu}\lambda}(t_1,t_0)\|\psi\|_{\mathbb{B}}
\end{eqnarray*}
and
\begin{eqnarray*}
\|z(t)\|_{\mathbb{B}} \leq PM\hat{e}_{\ominus_{\nu}\lambda}(t,t_{0})\|\psi\|_{\mathbb{B}},\quad
t\in(t_{0},t_{1}]_{\mathbb{T}},\,\,t_{0}\in[-\theta,0]_{\mathbb{T}}.
\end{eqnarray*}
Therefore, there must exist a constant $c\geq 1$ such that
\begin{eqnarray}\label{e48}
\|z(t_{1})\|_{\mathbb{B}}&=&\max\{|z_{i_1}(t_1)|,|z_{i_2}^{\nabla}(t_1)|\}=cP M \hat{e}_{\ominus_{\nu}\lambda}(t_1,t_0)\|\psi\|_{\mathbb{B}}
\end{eqnarray}
and
\begin{eqnarray}\label{e49}
\|z(t)\|_{\mathbb{B}} \leq cP M \hat{e}_{\ominus_{\nu}\lambda}(t,t_{0})\|\psi\|_{\mathbb{B}},\quad
t\in(t_{0},t_{1}]_{\mathbb{T}},\,\,t_{0}\in[-\theta,0]_{\mathbb{T}}.
\end{eqnarray}
In view of \eqref{c5}, we have
\begin{eqnarray*}
&&|z_{i_{1}}(t_{1})| \\
&=&\bigg|z_{i_{1}}(t_{0})e_{-c_{i_{1}}}(t_{1},t_{0})+\int_{t_{0}}^{t_{1}}e_{-c_{i_{1}}}(t_{1},\rho(s))\bigg(
c_{i_{1}}(s)\int_{s-\delta_{i_{1}}(s)}^{s}z_{i_{1}}^{\nabla}(u)
\nabla u\\
&&+\sum_{j=1}^{n}a_{i_{1}j}(s)\big[f_{j}(x_{j}(s))-f_{j}(x^{\ast}_{j}(s))\big]
+\sum_{j=1}^{n}b_{i_{1}j}(s)\big[g_{j}(x_{j}(s-\tau_{i_{1}j}(s)))\\
&&-g_{j}(x^{\ast}_{j}(s-\tau_{i_{1}j}(s)))\big]
+\sum_{j=1}^{n}d_{i_{1}j}(s)\int_{s-\sigma_{i_{1}j}(s)}^{s}\big[h_{j}(x_{j}^{\nabla}(u))-h_{j}((x^{\ast}_{j})^{\nabla}(u))\big]\nabla u\nonumber\\
&&+\sum_{j=1}^{n}\sum_{l=1}^{n}T_{i_{1}jl}(s)\big[k_{j}(x_{j}(s-\xi_{i_{1}jl}(s)))k_{l}(x_{l}(s-\zeta_{i_{1}jl}(s)))\\
&&-k_{j}(x^{\ast}_{j}(s-\xi_{i_{1}jl}(s)))k_{l}(x^{\ast}_{l}(s-\zeta_{i_{1}jl}(s)))\big]\bigg)\nabla s\bigg|\\
&\leq&\hat{e}_{-c_{i_{1}}}(t_{1},t_{0})\|\psi\|_{\mathbb{B}}
+cPM\hat{e}_{\ominus_{\nu}\lambda}(t_{1},t_{0})\|\psi\|_{\mathbb{B}}\int_{t_{0}}^{t_{1}}\hat{e}_{-c_{i_{1}}}(t_{1},\rho(s))
\hat{e}_{\lambda}(t_{1},\rho(s))\\
&&\times\bigg(c_{i_{1}}^{+}\int_{s-\delta_{i_{1}}(s)}^{s}\hat{e}_{\lambda}(\rho(u),u) \nabla u+\sum_{j=1}^{n}a_{i_{1}j}^{+}L_{j}^{f}\hat{e}_{\lambda}(\rho(s),s)
+\sum_{j=1}^{n}b_{i_{1}j}^{+}L_{j}^{g}\hat{e}_{\lambda}(\rho(s),s-\tau_{i_{1}j}(s))\\
&&+\sum_{j=1}^{n}d_{i_{1}j}^{+}L_{j}^{h}\int_{s-\sigma_{i_{1}j}(s)}^{s}\hat{e}_{\lambda}(\rho(u),u)\nabla u+\sum_{j=1}^{n}\sum_{l=1}^{n}T_{i_{1}jl}^{+}L_{j}^{k}L_{l}^{k}\big(\hat{e}_{\lambda}(\rho(s),s-\xi_{i_{1}jl}(s))\\
&&+\hat{e}_{\lambda}(\rho(s),s-\zeta_{i_{1}jl}(s))\big)\bigg)\nabla s\\
&\leq&\hat{e}_{-c_{i_{1}}}(t_{1},t_{0})\|\psi\|_{\mathbb{B}}
+cPM\hat{e}_{\ominus_{\nu}\lambda}(t_{1},t_{0})\|\psi\|_{\mathbb{B}}\int_{t_{0}}^{t_{1}}
\hat{e}_{-c_{i_{1}}\oplus_{\nu}\lambda}(t_{1},\rho(s))\\
&&\times\bigg(c_{i_{1}}^{+}\delta_{i_{1}}^{+}\hat{e}_{\lambda}(\rho(s),s-\delta_{i_{1}}(s))
+\sum_{j=1}^{n}a_{i_{1}j}^{+}L_{j}^{f}\hat{e}_{\lambda}(\rho(s),s)
+\sum_{j=1}^{n}b_{i_{1}j}^{+}L_{j}^{g}\hat{e}_{\lambda}(\rho(s),s-\tau_{i_{1}j}(s))\\
&&+\sum_{j=1}^{n}d_{i_{1}j}^{+}L_{j}^{h}\sigma_{i_{1}j}^{+}\hat{e}_{\lambda}(\rho(s),s-\sigma_{i_{1}j}(s))
+\sum_{j=1}^{n}\sum_{l=1}^{n}T_{i_{1}jl}^{+}L_{j}^{k}L_{l}^{k}\big(\hat{e}_{\lambda}(\rho(s),s-\xi_{i_{1}jl}(s))\\
&&+\hat{e}_{\lambda}(\rho(s),s-\zeta_{i_{1}jl}(s))\big)\bigg)\nabla s\\
  &\leq&\hat{e}_{-c_{i_{1}}}(t_{1},t_{0})\|\psi\|_{\mathbb{B}}
+cPM\hat{e}_{\ominus_{\nu}\lambda}(t_{1},t_{0})\|\psi\|_{\mathbb{B}}
\int_{t_{0}}^{t_{1}}\hat{e}_{-c_{i_{1}}\oplus_{\nu}\lambda}(t_{1},\rho(s))\\
&&\times\bigg(c_{i_{1}}^{+}\delta_{i_{1}}^{+}\exp\big[\lambda(\delta_{i_{1}}^{+}+\sup\limits_{s\in\mathbb{T}}\nu(s))\big]
+\sum_{j=1}^{n}a_{i_{1}j}^{+}L_{j}^{f}\exp\big(\lambda\sup\limits_{s\in\mathbb{T}}\nu(s)\big)\\
&&+\sum_{j=1}^{n}b_{i_{1}j}^{+}L_{j}^{g}\exp\big[\lambda(\tau_{i_{1}j}^{+}+\sup\limits_{s\in\mathbb{T}}\nu(s))\big]
+\sum_{j=1}^{n}d_{i_{1}j}^{+}L_{j}^{h}\sigma_{i_{1}j}^{+}\exp\big[\lambda(\sigma_{i_{1}j}^{+}+\sup\limits_{s\in\mathbb{T}}\nu(s))\big]\\
&&+\sum_{j=1}^{n}\sum_{l=1}^{n}T_{i_{1}jl}^{+}L_{j}^{k}L_{l}^{k}
\big(\exp\big[\lambda(\xi_{i_{1}j}^{+}+\sup\limits_{s\in\mathbb{T}}\nu(s))\big]
+\exp\big[\lambda(\zeta_{i_{1}j}^{+}+\sup\limits_{s\in\mathbb{T}}\nu(s))\big]\big)\bigg)\nabla s\\
&=&cPM \hat{e}_{\ominus_{\nu}\lambda}(t_{1},t_{0})\|\psi\|_{\mathbb{B}}\bigg\{\frac{1}{p
M}\hat{e}_{-c_{i_{1}}\oplus_{\nu}\lambda}(t_{1},t_{0})+\exp\big(\lambda\sup\limits_{s\in\mathbb{T}}\nu(s)\big)\\
&&\times\bigg(c_{i_{1}}^{+}\delta_{i_{1}}^{+}\exp(\lambda\delta_{i_{1}}^{+})
+\sum_{j=1}^{n}a_{i_{1}j}^{+}L_{j}^{f}+\sum_{j=1}^{n}b_{i_{1}j}^{+}L_{j}^{g}\exp(\lambda\tau_{i_{1}j}^{+})\\
&&+\sum_{j=1}^{n}d_{i_{1}j}^{+}L_{j}^{h}\sigma_{i_{1}j}^{+}\exp(\lambda\sigma_{i_{1}j}^{+})
+\sum_{j=1}^{n}\sum_{l=1}^{n}T_{i_{1}jl}^{+}L_{j}^{k}L_{l}^{k}
\big(\exp(\lambda\xi_{i_{1}j}^{+})
+\exp(\lambda\zeta_{i_{1}j}^{+})\big)\bigg)\\
&&\times\int_{t_{0}}^{t_{1}}\hat{e}_{-c_{i_{1}}\oplus_{\nu}\lambda}(t_{1},\rho(s))\bigg\}\nabla s\\
&\leq&cPM \hat{e}_{\ominus_{\nu}\lambda}(t_{1},t_{0})\|\psi\|_{\mathbb{B}}\bigg\{\frac{1}{p
M}\hat{e}_{-c_{i_{1}}\oplus_{\nu}\lambda}(t_{1},t_{0})+\exp\big(\lambda\sup\limits_{s\in\mathbb{T}}\nu(s)\big)\\
&&\times\bigg(c_{i_{1}}^{+}\delta_{i_{1}}^{+}\exp(\lambda\delta_{i_{1}}^{+})
+\sum_{j=1}^{n}a_{i_{1}j}^{+}L_{j}^{f}+\sum_{j=1}^{n}b_{i_{1}j}^{+}L_{j}^{g}\exp(\lambda\tau_{i_{1}j}^{+})\\
&&+\sum_{j=1}^{n}d_{i_{1}j}^{+}L_{j}^{h}\sigma_{i_{1}j}^{+}\exp(\lambda\sigma_{i_{1}j}^{+})
+\sum_{j=1}^{n}\sum_{l=1}^{n}T_{i_{1}jl}^{+}L_{j}^{k}L_{l}^{k}
\big(\exp(\lambda\xi_{i_{1}j}^{+})
+\exp(\lambda\zeta_{i_{1}j}^{+})\big)\bigg)\\
&&\times \frac{1-\hat{e}_{-c_{i_{1}}\oplus_{\nu}\lambda}(t_{1},t_{0})}{c_{i_{1}}^{-}-\lambda}\bigg\}\\
&\leq&cPM \hat{e}_{\ominus_{\nu}\lambda}(t_{1},t_{0})\|\psi\|_{\mathbb{B}}\bigg\{
\bigg[\frac{1}{M}-\frac{\exp\big(\lambda\sup\limits_{s\in\mathbb{T}}\nu(s)\big)}{c_{i_{1}}^{-}-\lambda}\bigg(
c_{i_{1}}^{+}\delta_{i_{1}}^{+}\exp(\lambda\delta_{i_{1}}^{+})\\
&&+\sum_{j=1}^{n}a_{i_{1}j}^{+}L_{j}^{f}+\sum_{j=1}^{n}b_{i_{1}j}^{+}L_{j}^{g}\exp(\lambda\tau_{i_{1}j}^{+})
+\sum_{j=1}^{n}d_{i_{1}j}^{+}L_{j}^{h}\sigma_{i_{1}j}^{+}\exp(\lambda\sigma_{i_{1}j}^{+})\\
&&+\sum_{j=1}^{n}\sum_{l=1}^{n}T_{i_{1}jl}^{+}L_{j}^{k}L_{l}^{k}
\big(\exp(\lambda\xi_{i_{1}j}^{+})
+\exp(\lambda\zeta_{i_{1}j}^{+})\big)\bigg)\bigg]\hat{e}_{-c_{i_{1}}\oplus_{\nu}\lambda}(t_{1},t_{0})\\
&&+\frac{\exp\big(\lambda\sup\limits_{s\in\mathbb{T}}\nu(s)\big)}{c_{i_{1}}^{-}-\lambda}
\bigg(c_{i_{1}}^{+}\delta_{i_{1}}^{+}\exp(\lambda\delta_{i_{1}}^{+})+
\sum_{j=1}^{n}a_{i_{1}j}^{+}L_{j}^{f}+\sum_{j=1}^{n}b_{i_{1}j}^{+}L_{j}^{g}\exp(\lambda\tau_{i_{1}j}^{+})\\
&&+\sum_{j=1}^{n}d_{i_{1}j}^{+}L_{j}^{h}\sigma_{i_{1}j}^{+}\exp(\lambda\sigma_{i_{1}j}^{+})
+\sum_{j=1}^{n}\sum_{l=1}^{n}T_{i_{1}jl}^{+}L_{j}^{k}L_{l}^{k}
\big(\exp(\lambda\xi_{i_{1}j}^{+})
+\exp(\lambda\zeta_{i_{1}j}^{+})\big)\bigg)
\bigg\}\\
&\leq&cPM \hat{e}_{\ominus_{\nu}\lambda}(t_{1},t_{0})\|\psi\|_{\mathbb{B}}
\end{eqnarray*}
and
\begin{eqnarray*}
|z_{i_{2}}^{\nabla}(t_{1})|&\leq& c_{i_2}^{+} \hat{e}_{-c_{i_2}}(t_{1},t_{0})\|\psi\|_{\mathbb{B}}+cP
M\hat{e}_{\ominus_{\nu}\lambda}(t_{1},t_{0})\|\psi\|_{\mathbb{B}}
\bigg(c_{i_2}^{+}\int_{t_{1}-\delta_{i_2}(t_{1})}^{t_{1}}e_{\lambda}(t_{1},u)
\nabla u\\
&&+\sum_{j=1}^{n}a_{i_2j}^{+}L_{j}^{f}\hat{e}_{\lambda}(t_1,t_1)
+\sum_{j=1}^{n}b_{i_2j}^{+}L_{j}^{g}\hat{e}_{\lambda}(t_1,t_1-\tau_{i_2j}(t_1))\\
&&+\sum_{j=1}^{n}d_{i_{2}j}^{+}L_{j}^{h}\int_{s-\sigma_{i_{2}j}(s)}^{s}\hat{e}_{\lambda}(\rho(u),u)\nabla u\\
&&+\sum_{j=1}^{n}\sum_{l=1}^{n}T_{i_{2}jl}^{+}L_{j}^{k}L_{l}^{k}\big(\hat{e}_{\lambda}(t_1,t_1-\xi_{i_2jl}(t_1))+
\hat{e}_{\lambda}(t_1,t_1-\zeta_{i_2jl}(t_1))\big)
\bigg)\\
&&+ c_{i_2}^{+}cP M\hat{e}_{\ominus_{\nu}\lambda}(t_{1},t_{0})\|\psi\|_{\mathbb{B}}\int_{t_{0}}^{t_{1}}\hat{e}_{-c_{i_2}}(t_{1},
\rho(s))\hat{e}_{\lambda}(t_{1},\rho(s))\\
&&\times\bigg\{
c_{i_2}^{+}\int_{s-\eta_{i_2}(s)}^{s}\hat{e}_{\lambda}(\rho(s),u) \nabla u
+\sum_{j=1}^{n}a_{i_2j}^{+}L_{j}^{f}\hat{e}_{\lambda}(\rho(s),s)\\
&&+\sum_{j=1}^{n}b_{i_2j}^{+}L_{j}^{g}\hat{e}_{\lambda}(\rho(s),s-\tau_{i_2j}(s))
+\sum_{j=1}^{n}d_{i_{2}j}^{+}L_{j}^{h}\int_{s-\sigma_{i_{2}j}(s)}^{s}\hat{e}_{\lambda}(\rho(u),u)\nabla u\\
&&+\sum_{j=1}^{n}\sum_{l=1}^{n}T_{i_{2}jl}^{+}L_{j}^{k}L_{l}^{k}\big(\hat{e}_{\lambda}(s,t_1-\xi_{i_2jl}(s))+
\hat{e}_{\lambda}(s,t_1-\zeta_{i_2jl}(s))\big)\bigg\}
\nabla s\\
&\leq&c_{i_2}^{+} e_{-c_{i_2}}(t_{1},t_{0})\|\psi\|_{\mathbb{B}}+cP
M\hat{e}_{\ominus_{\nu}\lambda}(t_{1},t_{0})\|\psi\|_{\mathbb{B}}
\bigg(c_{i_2}^{+}\delta_{i_2}^{+}\exp(\lambda\delta_{i_2}^{+})
+\sum_{j=1}^{n}a_{i_2j}^{+}L_{j}^{f}\\
&&+\sum_{j=1}^{n}b_{i_2j}^{+}L_{j}^{g}\exp(\lambda\tau_{i_2j}^{+})
+\sum_{j=1}^{n}d_{i_2j}^{+}L_{j}^{h}\sigma_{i_{2}j}^{+}\exp(\lambda\sigma_{i_2j}^{+})\\
&&+\sum_{j=1}^{n}\sum_{l=1}^{n}T_{i_{2}jl}^{+}L_{j}^{k}L_{l}^{k}\big(\exp(\lambda\xi_{i_2jl}^{+})
+\exp(\lambda\zeta_{i_2jl}^{+})\big)\bigg)\\
&&\times\bigg(1+c_{i_2}^{+}\exp(\lambda\sup\limits_{s\in\mathbb{T}}\nu(s))
\int_{t_{0}}^{t_{1}}\hat{e}_{-c_{i_2}\oplus\lambda}(t_{1},\rho(s))
\Delta s\bigg)\\
&\leq& cP M\hat{e}_{\ominus_{\nu}\lambda}(t_{1},t_{0})\|\psi\|_{\mathbb{B}}
\bigg\{\frac{c_{i_2}^{+}}{M}\hat{e}_{-(c_{i_2}^{-}-\lambda)}(t_{1},t_{0})
+\bigg(c_{i_2}^{+}\delta_{i_2}^{+}\exp(\lambda\delta_{i_2}^{+})
+\sum_{j=1}^{n}a_{i_2j}^{+}L_{j}^{f}\\
&&+\sum_{j=1}^{n}b_{i_2j}^{+}L_{j}^{g}\exp(\lambda\tau_{i_2j}^{+})
+\sum_{j=1}^{n}d_{i_2j}^{+}L_{j}^{h}\sigma_{i_{2}j}^{+}\exp(\lambda\sigma_{i_2j}^{+})\\
&&+\sum_{j=1}^{n}\sum_{l=1}^{n}T_{i_{2}jl}^{+}L_{j}^{k}L_{l}^{k}\big(\exp(\lambda\xi_{i_2jl}^{+})
+\exp(\lambda\zeta_{i_2jl}^{+})\big)\bigg)\\
&&\times\bigg(1+c_{i_2}^{+}\exp(\lambda\sup\limits_{s\in\mathbb{T}}\nu(s))\frac{1}{-(c_{i_2}^{-}-\lambda)}
\big(\hat{e}_{-(c_{i_2}^{-}-\lambda)}(t_{1},t_{0})-1\big)\bigg) \bigg\}\\
&\leq& cP M\hat{e}_{\ominus_{\nu}\lambda}(t_{1},t_{0})\|\psi\|_{\mathbb{B}}
\bigg\{\bigg[\frac{1}{M}-\frac{\exp(\lambda\sup\limits_{s\in\mathbb{T}}\nu(s))}{c_{i_2}^{-}-\lambda}
\bigg(c_{i_2}^{+}\delta_{i_2}^{+}\exp(\lambda\delta_{i_2}^{+})
\sum_{j=1}^{n}a_{i_2j}^{+}L_{j}^{f}\\
&&+\sum_{j=1}^{n}b_{i_2j}^{+}L_{j}^{g}\exp(\lambda\tau_{i_2j}^{+})
+\sum_{j=1}^{n}d_{i_2j}^{+}L_{j}^{h}\sigma_{i_{2}j}^{+}\exp(\lambda\sigma_{i_2j}^{+})\\
&&+\sum_{j=1}^{n}\sum_{l=1}^{n}T_{i_{2}jl}^{+}L_{j}^{k}L_{l}^{k}\big(\exp(\lambda\xi_{i_2jl}^{+})
+\exp(\lambda\zeta_{i_2jl}^{+})\big)\bigg)\bigg]c_{i_2}^{+}\hat{e}_{-(c_{i_2}^{-}-\lambda)}(t_{1},t_{0})\\
&&+\bigg(1+\frac{c_{i_2}^{+}\exp(\lambda\sup\limits_{s\in\mathbb{T}}\mu(s))}{c_{i_2}^{-}-\lambda}\bigg) \bigg(c_{i_2}^{+}\delta_{i_2}^{+}\exp(\lambda\delta_{i_2}^{+})\\
&&+\sum_{j=1}^{n}a_{i_2j}^{+}L_{j}^{f}+\sum_{j=1}^{n}b_{i_2j}^{+}L_{j}^{g}\exp(\lambda\tau_{i_2j}^{+})
+\sum_{j=1}^{n}d_{i_2j}^{+}L_{j}^{h}\sigma_{i_{2}j}^{+}\exp(\lambda\sigma_{i_2j}^{+})\\
&&+\sum_{j=1}^{n}\sum_{l=1}^{n}T_{i_{2}jl}^{+}L_{j}^{k}L_{l}^{k}\big(\exp(\lambda\xi_{i_2jl}^{+})
+\exp(\lambda\zeta_{i_2jl}^{+})\big)\bigg)\bigg\}\\
&<&cP M\hat{e}_{\ominus_{\nu}\lambda}(t_{1},t_{0})\|\psi\|_{\mathbb{B}}.
\end{eqnarray*}
The above two inequalities imply that
\begin{eqnarray*}
\|z(t_{1})\|_{\mathbb{B}}<cPM \hat{e}_{\ominus_{\nu}\lambda}(t_{1},t_{0})\|\psi\|_{\mathbb{B}},
\end{eqnarray*}
which contradicts \eqref{e48}, and so \eqref{e46} holds. Letting $P\rightarrow
1$, then \eqref{e45} holds. Hence, the pseudo almost periodic solution of system
\eqref{a1} is globally exponentially stable. The proof is complete.
\end{proof}

\section{ An example}
 \setcounter{equation}{0}

 \indent

In this section, we will give  an example to illustrate the feasibility
and effectiveness of our results obtained in Sections 3 and 4.

\begin{example}
Let $n=2$. Consider the following neural network system on time scale $\mathbb{T}$:
\begin{eqnarray}\label{d1}
x_{i}^{\nabla}(t)&=&-c_{i}(t)x_{i}(t-\delta_{i}(t))+\sum_{j=1}^{2}a_{ij}(t)f_{j}(x_{j}(t))\nonumber\\
&&+\sum_{j=1}^{2}b_{ij}(t)g_{j}(x_{j}(t-\tau_{ij}(t)))
+\sum_{j=1}^{2}d_{ij}(t)\int_{t-\sigma_{ij}(t)}^{t}h_{j}(x_{j}^{\nabla}(s))\nabla s\nonumber\\
&&+\sum_{j=1}^{2}\sum_{l=1}^{2}T_{ijl}(t)k_{j}(x_{j}(t-\xi_{ijl}(t)))k_{l}(x_{l}(t-\zeta_{ijl}(t)))
+I_{i}(t),
\end{eqnarray}
where $t\in \mathbb{T}$, $i=1,2$ and the coefficients are as follows:
\[c_{1}(t)=0.95+0.05\sin t,\,\,\,\,c_{2}(t)=0.91+0.04\sin t,\,\,\,\,a_{11}(t)=0.05\cos t
\]
\[a_{12}(t)=0.07\cos \sqrt{2}t,\,\,\,\,a_{21}(t)=0.05\cos\bigg(\frac{1}{3} t\bigg),\,\,\,\,a_{22}(t)=0.03\cos\bigg(\frac{3}{4}t\bigg),
\]
\[b_{11}(t)=0.06\sin t,\,\,\,\,b_{12}(t)=0.03\cos \sqrt{2}t,\,\,\,\,b_{21}(t)=0.04\cos t,\]
\[
b_{22}(t)=0.03\sin \sqrt{2}t,\,\,\,\,d_{11}(t)=0.08\sin t,\,\,\,\,d_{12}(t)=0.04\sin \sqrt{2}t,\,\,\,\,d_{21}(t)=0.06\sin t,
\]
\[d_{22}(t)=0.07\cos \sqrt{2}t,\,\,\,\,f_{1}(u)=0.1\sin u,\,\,\,\,f_{2}(u)=\sin u,\,\,\,\,g_{1}(u)=0.1\cos u,\,\,\,\,g_{2}(u)=\cos u,\]
\[h_{1}(u)=0.1\sin \frac{1}{3}u,\,\,\,\,h_{2}(u)=\sin \sqrt{u},\,\,\,\,k_{1}(u)=0.1\cos\sqrt{2 u},\,\,\,\,k_{2}(u)=\sin 3u,\]
\[T_{111}(t)=T_{222}(t)=0.075+0.025\sin \sqrt{2}t,\,\,T_{112}(t)=T_{212}(t)=0.07+0.03\cos\bigg(\frac{3}{4}t\bigg),\]
\[T_{121}(t)=T_{221}(t)=0.075+0.025\cos \sqrt{3}t,\,\, T_{122}(t)=T_{211}(t)=0.07+0.03\sin\bigg(\frac{3}{4}t\bigg).
\]
\[I_{1}(t)=I_{2}(t)=0.5\sin 2t,\,\,\,\,J_{1}(t)=J_{2}(t)=0.5\cos\sqrt{2}t.
\]
By a simple calculation, we have
\[c_{1}^{+}=0.1,\,\,\,c_{2}^{+}=0.95,\,\,\,c_{1}^{-}=0.9,\,\,\,c_{2}^{-}=0.0.87,
\]
\[a_{11}^{+}=0.05,\,\,\,a_{12}^{+}=0.07,\,\,\,\,a_{21}^{+}=0.05,\,\,\,a_{22}^{+}=0.03,
\]
\[b_{11}^{+}=0.06,\,\,\,b_{12}^{+}=0.03,\,\,\,b_{21}^{+}=0.04,\,\,\,b_{22}^{+}=0.03,
\]
\[d_{11}^{+}=0.08,\,\,\,d_{12}^{+}=0.04,\,\,\,d_{21}^{+}=0.06,\,\,\,d_{22}^{+}=0.07,
\]
\[T_{111}^{+}=T_{222}^{+}=T_{112}^{+}=T_{212}^{+}=T_{121}^{+}=T_{221}^{+}=T_{122}^{+}=T_{211}^{+}=0.01,
\]
\[H_{1}^{f}=H_{1}^{g}=H_{1}^{h}=H_{1}^{k}=0.1,\,\,\,H_{2}^{f}=H_{2}^{g}=H_{2}^{h}=H_{2}^{k}=1.\]
Therefore, whether $\mathbb{T}=\mathbb{R}$ or $\mathbb{T}=\mathbb{Z}$, all the conditions of Theorem \ref{thm31} and Theorem
\ref{thm41} are satisfied, hence, we know that system \eqref{d1} has a pseudo almost periodic solution, which is globally exponentially stable. This is, the continuous-time neural network and its discrete-time analogue have the same dynamical behaviors for the pseudo almost periodicity.

\end{example}

\end{document}